\newcommand{\eu}{{e_+}}  \newcommand{\es}{{e_-}}
\newcommand{\BC}{\mathbb{BC}}
\newcommand{\R}{\mathbb{R}} 
\newcommand{\C}{\mathbb{C}}
\newcommand{\D}{\mathbb{D}}
\newcommand{\bz}{\overline{z}}
 \numberwithin{equation}{section}  \makeatletter\@addtoreset{equation}{section}
\theoremstyle{plain}
 \newtheorem{thm}{Theorem}[section]
 \newtheorem{cor}[thm]{Corollary}
 \newtheorem{lem}[thm]{Lemma}
 \newtheorem{prop}[thm]{Proposition}
 \theoremstyle{definition}
 \newtheorem{defn}[thm]{Definition}
 \theoremstyle{remark}
 \newtheorem{rem}[thm]{Remark}
 \newtheorem*{ex}{Counterexample}
 \numberwithin{equation}{section}
\begin{document}

%
%
%
%
%
%
%
%
%

\title[bc-polyharmonicity and polyholomorphy]
{Bicomplex polyharmonicity and polyholomorphy}

\author[A. El Gourari]{Aiad El Gourari} 
\address{%
	Lab. P.D.E., Algebra and Spectral Geometry,\newline
	Department of mathematics, Faculty of sciences, P.O.Box 133.\newline
	Ibn Tofail University in Kenitra; Morocco}
\email{aiad.elgourari@uit.ac.ma}
\author[A. Ghanmi]{Allal Ghanmi} 
\address{Analysis, P.D.E $\&$ Spectral Geometry - Lab. M.I.A.-S.I., CeReMAR, \newline
		P.O. Box 1014,  Faculty of Sciences, \newline
		Mohammed V University in Rabat, Morocco}
	\email{allal.ghanmi@um5.ac.ma}
\author[I. Rouchdi]{Ilham Rouchdi}
\address{Facult\'e des Sciences Juridiques, Economiques et Sociales,  \newline
	Universit\'e Sidi Mohamed Ben Abdellah de Fes, \newline
	Dhar El Mehraz, BP.42, 	Atlas 30000,   Fes, Maroc}
 	\email{irouchdi@yahoo.fr}

\quad

\subjclass{Primary 31B30;  Secondary 31B35}

\keywords{Bicomplex; Hyperbolic real part; bc-polyholomorphic functions; Bicomplex Laplace operator; Bicomplex polyharmonic functions}

\date{February 19, 2021}

\begin{abstract}
		In this paper, we are concerned with the bicomplex analog of the well-known result asserting that real-valued harmonic functions, on simply connected domains, are the real parts of holomorphic functions. 
		We show that this assertion, word for word, fails for bc-harmonic functions and we provide a complete characterization of bc-harmonic functions that are the hyperbolic real parts of a specific kind of bc-holomorphic functions. Moreover, we extend the result to bicomplex polyharmonic functions, which implies the introduction of specific classes of bc-polyholomorphic functions. 
\end{abstract}

\maketitle
\section{Introduction and statement of main results}

Harmonic function theory plays an important role in various fields of mathematics and physics, notably in studying minimal surfaces, digital processing, elasticity theory, and electrical engineering \cite{BerensteinGay1991,Choquet1945,Clunie1984,Kneser1926,Lewy1936,Ransford1995}. 
A well-known result in this theory asserts that, on simply connected domains, harmonic functions are the real parts of holomorphic functions (unique up to adding a constant), and that the converse also holds true. 
Such a result plays a crucial role in both complex analysis and potential theory \cite{BerensteinGay1991,Ransford1995}, and is always used to establish the connection between holomorphic and harmonic theories, and therefore to obtain interesting results regarding these fields by exploiting this connection \cite{Nicolesco1936}.

The extension of this property to other contexts, like those involving quaternion or bicomplex numbers, is not considered in the literature, even though the analogs of the holomorphic notion exist (slice regularity and bc-holomorphy) and their theories are already well developed. Such an extension will shed light on the corresponding theories and will open the door for further investigations and applications, for example, by establishing the bicomplex analogs of Liouville-Picard-Hadamard and Harnack-Montel theorems, among others, giving birth to bicomplex potential theory.

In the present paper, we examine the bicomplex analog of the aforesaid assertion.
We show that only the real-valued constant functions are realizable as the real part of bicomplex-holomorphic functions. Alternatively, this fails for both real and hyperbolic-valued (non constant) bc-harmonic functions, unless we consider, instead, a specific subclass of hyperbolic-valued bicomplex $\Delta_{bc}$-harmonic functions. More generally, we prove the following result. 

\begin{thm}\label{thmp000} 
	A hyperbolic-valued $\Delta_{bc}$-harmonic function in $\BC$  is the hyperbolic real part of a bc-holomorphic function if and only if it belongs to $\ker(\partial_{\widetilde{Z}}) \cap \ker(\partial_{Z^\dagger}) $.
\end{thm}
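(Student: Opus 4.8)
The plan is to reduce everything to the idempotent representation $Z = \beta_+ e_+ + \beta_- e_-$, $\beta_\pm \in \C$, in which a bicomplex function splits as $F = F_+ e_+ + F_- e_-$ with $\C$-valued components $F_\pm$. I would first record the dictionary entries I expect to have been established earlier: (i) $F$ is bc-holomorphic iff each $F_\pm$ is a holomorphic function of the single variable $\beta_\pm$; (ii) $h$ is hyperbolic-valued iff both components $h_\pm$ are real-valued, and the hyperbolic real part of any $F = F_+ e_+ + F_- e_-$ equals $\mathrm{Re}(F_+)\, e_+ + \mathrm{Re}(F_-)\, e_-$; (iii) $\Delta_{bc}$ acts diagonally, so $h$ is $\Delta_{bc}$-harmonic iff $h_+$ is harmonic in $\beta_+$ and $h_-$ is harmonic in $\beta_-$. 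The remaining ingredient is to compute the idempotent form of the two conjugation derivatives and to observe that $\partial_{\widetilde{Z}}$ and $\partial_{Z^\dagger}$ are the two \emph{cross} operators, whose components differentiate $h_+$ with respect to $\beta_-$ (resp. $\bar\beta_-$) and $h_-$ with respect to $\beta_+$ (resp. $\bar\beta_+$). For hyperbolic (hence real-component) $h$ this collapses the holomorphic and antiholomorphic cross-conditions, so that $h \in \ker(\partial_{\widetilde{Z}}) \cap \ker(\partial_{Z^\dagger})$ is exactly the statement that $h_+$ is independent of $\beta_-$ and $h_-$ is independent of $\beta_+$.

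For the sufficiency direction I would start from $h \in \ker(\partial_{\widetilde{Z}}) \cap \ker(\partial_{Z^\dagger})$ together with $\Delta_{bc}$-harmonicity. By the previous paragraph $h_+ = h_+(\beta_+)$ and $h_- = h_-(\beta_-)$ are real-valued and harmonic each in its own (simply connected) complex plane. Invoking the classical theorem that a real harmonic function on a simply connected domain is the real part of a holomorphic function — the very assertion being lifted to $\BC$ — I obtain holomorphic $F_\pm(\beta_\pm)$ with $\mathrm{Re}(F_\pm) = h_\pm$. Setting $F = F_+ e_+ + F_- e_-$ then yields, by (i), a bc-holomorphic function whose hyperbolic real part is, by (ii), precisely $h$.

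For necessity I would run the implication backwards: if $h$ is the hyperbolic real part of a bc-holomorphic $F = F_+ e_+ + F_- e_-$, then $h_\pm = \mathrm{Re}(F_\pm)$ inherits from (i) the property of depending only on $\beta_\pm$, whence $\partial_{\beta_-} h_+ = \partial_{\beta_+} h_- = 0$ and likewise for the conjugate variables; by the dictionary of the first paragraph this reads exactly $\partial_{\widetilde{Z}} h = 0$ and $\partial_{Z^\dagger} h = 0$. The main obstacle I anticipate is bookkeeping rather than depth: pinning down the exact idempotent form of $\partial_{\widetilde{Z}}$ and $\partial_{Z^\dagger}$ (with the correct pairing of $\beta_+ \leftrightarrow \beta_-$), and checking carefully that real-valuedness of the components reduces the two kernel conditions to the single geometric statement that each component depends on its own variable alone. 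Once that is secured, the analytic heart of the argument is merely the classical existence of a harmonic conjugate on the simply connected $\BC$.
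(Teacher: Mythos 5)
Your proof is correct and takes essentially the same route as the paper: both rest on the idempotent decomposition, the observation that $\partial_{Z^\dagger}$ and $\partial_{\widetilde{Z}}$ are the cross-variable derivatives (so their joint kernel forces $F^+$ to depend only on $\alpha$ and $F^-$ only on $\beta$), the classical harmonic-conjugate theorem applied componentwise, and Price's characterization of bc-holomorphy to reassemble $\varphi^+(\alpha)e_+ + \varphi^-(\beta)e_-$. The only cosmetic difference is in the necessity direction, where the paper computes with $F=\tfrac12\left(f+f^*\right)$ and the conjugated Cauchy--Riemann system instead of your componentwise verification, but the content is identical.
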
  

The generalization of Theorem \ref{thmp000} to bc-polyholomorphic functions 
suggests the introduction of two specific classes $\mathcal{A}^{[1]}_{m,n}(\BC)$ and $\mathcal{A}^{[2]}_{m,n,k}(\BC)$ of bc-polyholomorphic functions (see Definitions \ref{defbcpolhol1kind} and \ref{defmnkbcpolyh}), for which we provide polynomial and operational characterizations.   
We next define the bicomplex polyharmonic functions, prove Theorem \ref{thmpolharm5} concerning their representation in terms of bc-harmonic functions, and discuss the bicomplex polyharmonicity of the real and hyperbolic real parts of the considered classes of bc-polyholomorphic functions as well as of their idempotent components (Proposition \ref{proppolholharm}). This will be very useful to connect the polyharmonic functions with harmonic functions.   
We also deal with the question concerning the determination and the uniqueness of such classes of bc-polyholomrphic functions for which the required assertion holds true (see Proposition \ref{propunic} and Theorem \ref{thmpolholharm5}).
To summarize, the main result provides a complete  and explicit description of those that are the hyperbolic real parts of bc-polyholomorphic functions.   

\begin{thm}\label{Mainthmp}
	Let $F: \BC \longrightarrow \D$ be a bc-polyharmonic function of (exact) order $m$. Then 
	\begin{enumerate}
		\item[(i)] If $F=\Re_{hyp} f$ for some $f \in \mathcal{A}^{[2]}_{s,n,k}(\BC)$, then $s=m$ and  
		$$F\in  \ker(\partial_{Z^\dagger}^{\max(n,k)}) \cap \ker(\partial_{\widetilde{Z}}^{\max(n,k)}).$$ 
		
		\item[(ii)] If $F\in  \ker(  \partial_{Z^\dagger}^{n}) \cap \ker(\partial_{\widetilde{Z}}^{k})$, then there exist a family of bc-polyharmonic functions $G_{\ell_1,\ell_2}$ of order $m$ decomposable as 
		$  G_{\ell_1,\ell_2}(Z,Z^*) = a_{\ell_1,\ell_2}(\alpha,\overline{\alpha} ) \eu + b_{\ell_1,\ell_2}(\beta,\overline{\beta}) \es $
		such that 
		$$ F(Z) = \sum_{\ell_1=0}^{n-1}\sum_{\ell_2=0}^{k-1} G_{\ell_1,\ell_2}(Z,Z^*) (Z^\dagger)^{\ell_1} \widetilde{Z}^{\ell_2}.$$
		Moreover, $F$ is of the form 
		$$ F(Z) = \sum_{\ell_1=0}^{n-1}\sum_{\ell_2=0}^{k-1} \Re_{hyp} f_{\ell_1,\ell_2} (Z,Z^*)   (Z^\dagger)^{\ell_1} \widetilde{Z}^{\ell_2}$$
		for some $ f_{\ell_1,\ell_2}\in \mathcal{A}^{[1]}_{r,s}(\BC)$ such that $m=\max(r,s)$.
	\end{enumerate}
\end{thm}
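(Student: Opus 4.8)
The whole argument is driven by the idempotent calculus, which turns each bicomplex statement into two scalar statements in the complex variables $\alpha$ and $\beta$. Writing $F = F_\eu \eu + F_\es \es$, the cross-derivatives act componentwise by differentiating each idempotent slot in the variables of the opposite slot, so that (with the conventions fixed earlier) $\partial_{Z^\dagger}F=(\partial_\beta F_\eu)\eu+(\partial_\alpha F_\es)\es$ and $\partial_{\widetilde{Z}}F=(\partial_{\overline\beta}F_\eu)\eu+(\partial_{\overline\alpha}F_\es)\es$, while the factorization $\Delta_{bc}=\partial_Z\partial_{Z^*}$ (up to a nonzero constant) makes $\Delta_{bc}$ act by the two Laplacians in the matched variables $(\alpha,\overline\alpha)$ and $(\beta,\overline\beta)$. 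In particular $\ker(\partial_{Z^\dagger})\cap\ker(\partial_{\widetilde{Z}})$ consists exactly of the proper functions $G=a(\alpha,\overline\alpha)\eu+b(\beta,\overline\beta)\es$ of Theorem \ref{thmp000}, and $F$ is bc-polyharmonic of exact order $m$ precisely when $F_\eu,F_\es$ are polyharmonic with maximal exact order $m$. I would reduce both parts to these scalar components.

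For (i) I would use the idempotent-preserving symmetrization $\Re_{hyp}f=\tfrac12(f+f^*)$. Since the $*$-conjugation fixes $\eu,\es$ and interchanges barred and unbarred variables, it intertwines the two cross-derivatives $\partial_{Z^\dagger}$ and $\partial_{\widetilde{Z}}$; hence if $f\in\mathcal{A}^{[2]}_{s,n,k}(\BC)$, which by Definition \ref{defmnkbcpolyh} lies in $\ker(\partial_{Z^\dagger}^{n})\cap\ker(\partial_{\widetilde{Z}}^{k})$, then $f^*\in\ker(\partial_{Z^\dagger}^{k})\cap\ker(\partial_{\widetilde{Z}}^{n})$, and therefore $F=\tfrac12(f+f^*)\in\ker(\partial_{Z^\dagger}^{\max(n,k)})\cap\ker(\partial_{\widetilde{Z}}^{\max(n,k)})$, which is exactly the $\max(n,k)$ appearing in the statement. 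The identity $s=m$ would follow from $\Delta_{bc}^{j}F=\Re_{hyp}(\Delta_{bc}^{j}f)$ together with the order bookkeeping built into Definition \ref{defmnkbcpolyh} and Proposition \ref{proppolholharm}: the exact polyharmonic order of $\Re_{hyp}f$ equals the defining index $s$, and comparing with the hypothesis that $F$ has exact order $m$ forces $s=m$.

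For (ii) I would produce the double expansion by iterating the polyanalytic decomposition in each cross-variable. Because $\partial_{Z^\dagger}Z^\dagger=1$ while $Z^\dagger\in\ker(\partial_Z)\cap\ker(\partial_{Z^*})\cap\ker(\partial_{\widetilde{Z}})$, the hypothesis $F\in\ker(\partial_{Z^\dagger}^{n})$ yields $F=\sum_{\ell_1=0}^{n-1}(Z^\dagger)^{\ell_1}A_{\ell_1}$ with $A_{\ell_1}\in\ker(\partial_{Z^\dagger})$; repeating in $\widetilde{Z}$ under $F\in\ker(\partial_{\widetilde{Z}}^{k})$ gives
\[
F(Z)=\sum_{\ell_1=0}^{n-1}\sum_{\ell_2=0}^{k-1}(Z^\dagger)^{\ell_1}\widetilde{Z}^{\ell_2}\,G_{\ell_1,\ell_2}(Z,Z^*),\qquad G_{\ell_1,\ell_2}\in\ker(\partial_{Z^\dagger})\cap\ker(\partial_{\widetilde{Z}}),
\]
so each coefficient is proper, $G_{\ell_1,\ell_2}=a_{\ell_1,\ell_2}(\alpha,\overline\alpha)\eu+b_{\ell_1,\ell_2}(\beta,\overline\beta)\es$. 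Since $Z^\dagger,\widetilde{Z}\in\ker(\partial_Z)\cap\ker(\partial_{Z^*})$, the operator $\Delta_{bc}$ commutes through the monomials $(Z^\dagger)^{\ell_1}\widetilde{Z}^{\ell_2}$, so $\Delta_{bc}^{m}F=\sum_{\ell_1,\ell_2}(Z^\dagger)^{\ell_1}\widetilde{Z}^{\ell_2}\,\Delta_{bc}^{m}G_{\ell_1,\ell_2}$; the linear independence of these monomials forces each $G_{\ell_1,\ell_2}$ to be bc-polyharmonic of order at most $m$, with the maximum equal to $m$. Finally, each proper bc-polyharmonic coefficient is, by the polyharmonic refinement of Theorem \ref{thmp000} recorded in Proposition \ref{proppolholharm}, the hyperbolic real part $\Re_{hyp}f_{\ell_1,\ell_2}$ of some $f_{\ell_1,\ell_2}\in\mathcal{A}^{[1]}_{r,s}(\BC)$ with $\max(r,s)=m$, which yields the last displayed representation of $F$.

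The structural steps, namely the idempotent splitting and the two polyanalytic expansions, are routine once the earlier results are in hand; the genuine difficulty is the exact-order accounting. Proving $s=m$ in (i) and $m=\max(r,s)$ in (ii) amounts to showing that forming hyperbolic real parts neither raises nor lowers the polyharmonic order, i.e. that the order cannot collapse when the two idempotent components are recombined. This hinges on the precise definitions of $\mathcal{A}^{[1]}_{r,s}(\BC)$ and $\mathcal{A}^{[2]}_{s,n,k}(\BC)$ (Definitions \ref{defbcpolhol1kind} and \ref{defmnkbcpolyh}) and on the sharp correspondence in Proposition \ref{proppolholharm} between the polyanalytic order of a component and the exact polyharmonic order of its real part; keeping these indices synchronized across the double summation is, to my mind, the crux of the proof.
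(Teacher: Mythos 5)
Your proposal is correct and follows essentially the same route as the paper's proof: part (i) via the same $*$-conjugation intertwining of $\partial_{Z^\dagger}$ and $\partial_{\widetilde{Z}}$ applied to $F=\tfrac12(f+f^*)$ together with the order bookkeeping of Proposition \ref{proppolholharm}/\ref{propunic}, and part (ii) via the same double polyanalytic expansion in the cross-variables (the paper carries it out on the idempotent components in $\beta,\overline{\beta}$ and $\alpha,\overline{\alpha}$, which is the identical computation), the same commutation of $\Delta_{bc}$ past the monomials $(Z^\dagger)^{\ell_1}\widetilde{Z}^{\ell_2}$ and linear-independence argument, and the same final reduction to the scalar result of Balk. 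One small correction: the last step, realizing each proper coefficient $G_{\ell_1,\ell_2}$ as $\Re_{hyp}f_{\ell_1,\ell_2}$, is not "recorded in Proposition \ref{proppolholharm}" (that proposition gives only the forward direction, polyholomorphic $\Rightarrow$ polyharmonic); the correct reference is Theorem \ref{polyHH} applied componentwise, equivalently the converse part of Theorem \ref{thmCharmn}, which is what the paper invokes.
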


 In fact, this generalizes the result established for the first class  $\mathcal{A}^{[1]}_{m,n}(\BC)$. 
\begin{thm}\label{thmCharmn}
The hyperbolic real part $\Re_{hyp}(f)$ of  $f\in \mathcal{A}^{[1]}_{m,n}(\BC)$is  $\Delta_{bc}$-polyharmonic function of order $\max(m,n)$. Conversely, a sufficient condition for a hyperbolic-valued $\Delta_{bc}$-polyharmonic function $F$ of order $\ell$ to be the hyperbolic real part of some $(m,n)$-bc-polyholomorphic function is to satisfy $\partial_{Z^\dagger} F= \partial_{\widetilde{Z}} F=0$ with $\ell=\max(m,n)$.
\end{thm}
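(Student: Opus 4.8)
The plan is to pass to the idempotent representation $Z=\alpha\eu+\beta\es$ with $\alpha,\beta\in\C$, in which the statement decouples into two independent one--complex--variable problems, one attached to $\eu$ and one to $\es$. Writing $f=f_+\eu+f_-\es$, the relevant operators split as $\partial_{Z^\dagger}=\partial_\beta\eu+\partial_\alpha\es$ and $\partial_{\widetilde{Z}}=\partial_{\bar\beta}\eu+\partial_{\bar\alpha}\es$, so that $\partial_{Z^\dagger}f=(\partial_\beta f_+)\eu+(\partial_\alpha f_-)\es$ and similarly for $\partial_{\widetilde{Z}}$; meanwhile the bicomplex Laplacian factors (up to normalization) as $\Delta_{bc}=(\partial_\alpha\partial_{\bar\alpha})\eu+(\partial_\beta\partial_{\bar\beta})\es$, whence $\Delta_{bc}^{p}=(\partial_\alpha\partial_{\bar\alpha})^{p}\eu+(\partial_\beta\partial_{\bar\beta})^{p}\es$. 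Thus $F=F_+\eu+F_-\es$ is $\Delta_{bc}$-polyharmonic of order $p$ exactly when $F_+$ is $p$-polyharmonic in $\alpha$ and $F_-$ is $p$-polyharmonic in $\beta$, and $\Re_{hyp}f=\Re(f_+)\eu+\Re(f_-)\es$. Finally, by the characterization of $\mathcal{A}^{[1]}_{m,n}(\BC)$, membership amounts to $f_+$ being $m$-polyanalytic in $\alpha$ and $f_-$ being $n$-polyanalytic in $\beta$, each confined to its own plane.

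For the direct implication I would first record the scalar fact that the real part of an $m$-polyanalytic function is $m$-polyharmonic: if $\partial_{\bar\alpha}^{m}f_+=0$ then also $\partial_\alpha^{m}\overline{f_+}=0$, hence
\[
(\partial_\alpha\partial_{\bar\alpha})^{m}\Re(f_+)=\tfrac12\big(\partial_\alpha^{m}(\partial_{\bar\alpha}^{m}f_+)+\partial_{\bar\alpha}^{m}(\partial_\alpha^{m}\overline{f_+})\big)=0 .
\]
Applying this to $f_+$ with exponent $m$ and to $f_-$ with exponent $n$, and feeding the result into the component form of $\Delta_{bc}^{\max(m,n)}$, gives $\Delta_{bc}^{\max(m,n)}(\Re_{hyp}f)=0$; that is, $\Re_{hyp}f$ is $\Delta_{bc}$-polyharmonic of order $\max(m,n)$.

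For the converse, let $F$ be hyperbolic-valued, $\Delta_{bc}$-polyharmonic of order $\ell$, with $\partial_{Z^\dagger}F=\partial_{\widetilde{Z}}F=0$. By the split above, these two kernel conditions read $\partial_\beta F_+=\partial_\alpha F_-=0$ and $\partial_{\bar\beta}F_+=\partial_{\bar\alpha}F_-=0$, which force $F_+=F_+(\alpha,\bar\alpha)$ and $F_-=F_-(\beta,\bar\beta)$: each idempotent component lives in its own plane. Together with polyharmonicity, $F_+$ is a real-valued $\ell$-polyharmonic function of $\alpha$ and $F_-$ a real-valued $\ell$-polyharmonic function of $\beta$. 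I would then invoke the scalar representation of polyharmonic functions as real parts of polyanalytic ones: via the Almansi decomposition $F_+=\sum_{k=0}^{\ell-1}|\alpha|^{2k}u_k(\alpha)$ with $u_k$ harmonic, writing each real harmonic piece as $u_k=\Re(g_k)$ through the base case (Theorem \ref{thmp000} applied componentwise, i.e. the classical ``harmonic $=$ real part of holomorphic'' on a simply connected domain) and absorbing $|\alpha|^{2k}=\bar\alpha^{k}\alpha^{k}$ yields $F_+=\Re\big(\sum_{k}\bar\alpha^{k}\alpha^{k}g_k(\alpha)\big)=\Re(g)$ with $g$ polyanalytic in $\alpha$; likewise $F_-=\Re(h)$ with $h$ polyanalytic in $\beta$. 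Denoting by $m$ and $n$ the exact polyanalytic orders of $g$ and $h$, so that $\max(m,n)=\ell$, the function $f:=g\eu+h\es$ lies in $\mathcal{A}^{[1]}_{m,n}(\BC)$ and satisfies $\Re_{hyp}f=\Re(g)\eu+\Re(h)\es=F$.

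The main obstacle is the converse's scalar input together with its order bookkeeping: one must establish that a real-valued $p$-polyharmonic function on a simply connected planar domain is exactly the real part of a $p$-polyanalytic function, with matching orders. Existence is not the difficulty (it follows from Almansi plus the harmonic base case); the delicate point is the control of the order, since the real part of a polyanalytic function can drop order (for instance $\Re(\bar\alpha^{\,p-1})$ is harmonic). One therefore has to check that the Almansi-based representative has polyanalytic order exactly $\ell$ in the generic case, and then \emph{define} $m,n$ as the resulting exact orders, so as to guarantee both $\ell=\max(m,n)$ and genuine membership in $\mathcal{A}^{[1]}_{m,n}(\BC)$.
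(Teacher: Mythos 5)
Your proposal is correct, and its bicomplex skeleton is exactly the paper's: the same idempotent splitting of $\partial_{Z^\dagger}$, $\partial_{\widetilde{Z}}$ and $\Delta_{bc}$, the same observation that the two kernel conditions confine $F^+$ to the $(\alpha,\overline{\alpha})$-plane and $F^-$ to the $(\beta,\overline{\beta})$-plane, and the same componentwise reduction to a one-complex-variable statement. Where you genuinely differ is in how that scalar statement is supplied. The paper simply quotes Balk's correspondence (Theorem~\ref{polyHH}) in both directions, so its proof is two lines per implication; you re-derive both ingredients: the forward one by the computation $(\partial_\alpha\partial_{\overline{\alpha}})^{m}\Re(f^+)=\tfrac12\left(\partial_\alpha^{m}\partial_{\overline{\alpha}}^{m}f^{+}+\partial_{\overline{\alpha}}^{m}\partial_\alpha^{m}\overline{f^{+}}\right)=0$, and the converse one from the Almansi decomposition (Theorem~\ref{thmAlmansi}) plus the classical ``harmonic equals real part of holomorphic'' base case, absorbing $|\alpha|^{2k}=\overline{\alpha}^{k}\alpha^{k}$ into the real part. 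What your route buys is self-containedness, and it forces the order bookkeeping into the open, which you rightly identify as the only delicate point. It is in fact resolved in all cases, not just ``generically'': since $F$ has exact order $\ell$, at least one component, say $F^+$, has exact polyharmonic order $\ell$, so its top Almansi coefficient satisfies $u_{\ell-1}=\Re(g_{\ell-1})\not\equiv 0$, hence $g_{\ell-1}\not\equiv 0$ and the representative $g=\sum_{k}\overline{\alpha}^{k}\alpha^{k}g_k(\alpha)$ has exact polyanalytic order $\ell$; defining $m,n$ as the exact orders of the two representatives then gives $\max(m,n)=\ell$ together with genuine membership in $\mathcal{A}^{[1]}_{m,n}(\BC)$. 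Finally, your counterexample $\Re(\overline{\alpha}^{\,p-1})$ makes visible something the paper's citation of Theorem~\ref{polyHH} glosses over: in the forward direction the exact order can drop, so the first assertion of the theorem (and the paper's own proof, which only establishes $\Delta_{bc}^{\max(m,n)}\Re_{hyp}(f)=0$) must be read as ``polyharmonic of order at most $\max(m,n)$''.
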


For $m=1$, we recover Theorem \ref{thmp000} concerning the hyperbolic-valued bc-harmonic functions. 

The content of this paper is structured as follows. 
Besides the introductory section, needed notations, concepts, and elementary results on bicomplex numbers, holomorphy, and harmonicity are collected in Section 2. 
Our first main result for bc-harmonic functions is proved in this section. 
Section 3 is devoted to the  two classes of bc-polyholomorphic functions and their different characterizations. 
Section 4 deals with the bc-polyholomorphic functions.  
The main results for polyholomorphic functions are discussed, stated, and proved in Section 5.

\section{Preliminaries, notations and motivation}

\subsection{Backgrounds on bicomplex numbers.} 

Bicomplex numbers  generalize complex numbers more narrowly and offer a commutative $4$D real algebra (alternative to the division algebra of quaternions).
Their space $\BC$ can be thought of as specific combination of two copies of  $\C=\{x+iy ;\, x,y\in \R\}$, the set of complex numbers with $i$ as its imaginary unit. 
In fact, in the abstract algebra, $\BC$ is a set of structured pairs of complex numbers $(z_1,z_2)$  constructed by the Cayley-Dickson process that defines the bicomplex conjugate as $(z_1,-z_2)$, so that the corresponding product satisfies the property of quadratic form which means that $\BC$ is a composition algebra \cite{Dickson1914}.  
Subsequently, the linear algebra $\BC$,  endowed with 
the natural addition and the multiplication operations,  inherits similar properties as $\C$ except for division, where divisors of zero occur.
These numbers have been extensively studied by the Italian school of Segr\`e, Burgatti, Spampinato and Dragoni \cite{Segre1892,Burgatti1922,Dragoni1934,Spampinato1935a,Spampinato1936}.  For a complete study, we refer to  the works carried out by  Riley \cite{Riley1953} and Price \cite{Price1991}.  For extensive bibliography see
\cite{Riley1953,Hille1948,Takasu1943,Ward1940}.
 To provide the basis for a rigorous study of the modules of bc-holomorphic functions, many effort have been employed leading to general theory of functional analysis with bicomplex scalars \cite{AlpayElizarrarasshapiroStruppa2014}. 
Their properties and relationships with hypercomplex functions have been recently investigated in \cite{Ronn2001}. Some elementary functions are introduced and studied in \cite{RochonShapiro2004,AlpayElizarrarasshapiroStruppa2014,LunaShapiroStruppaVajiacBook2015}.
The associated  infinite and finite dimensional Hibertian structures have been considered  in \cite{LavoieMarchildonRochon2010,LavoieMarchildonRochon2011}. Related integral transforms, including bicomplex analogs of Fourier-Wigner, Segal-Bargmann and fractional Fourier transforms,
have been investigated in \cite{ElGourariGhanmiZine2020,GhanmiZine2019}.

The set  $\BC$ can be represented as  $ \BC=\{Z=z_1 + j z_2; z_1,z_2\in \C\}$, where  $j$ is another imaginary unit, $j^2=-1=i^2$, commuting with $i$ (i.e., $ij=ji$).
While the  matrix representation reads 
$$	\left(\begin{array}{cc}	z_1 & iz_2 \\ iz_2 & z_1
\end{array}\right)  $$
whose determinant $\det (Z)= z_1^{2}+z_2^{2}$.
Thus, given a bicomplex number $Z= z_1 + j z_2\in \BC$, 
we define its  bicomplex conjugate with respect to $j$ by
$$Z^\dagger=\mathcal{C}^{j}(Z) := z_1 - j z_2.$$
The conjugation with respect to $i$ is defined by
$$\widetilde{Z}=\mathcal{C}^{i}(Z) := \overline{z_1} + j \overline{z_2}$$
The last one $$ Z^*=\mathcal{C}^{j}(\mathcal{C}^{i}(Z)):= \overline{z_1} - j \overline{z_2}$$ 
is defined to be the conjugation with respect to both $i$ and $j$. 
The nullity of $\det (Z) = ZZ^\dag$ gives rise to specific classes of bicomplex numbers.
Indeed, $ZZ^{\dag}\ne 0$ characterizes those that are invertible, While $ZZ^{\dag} = 0$ characterizes the zero divisor set in $\BC$  given by the null cone of bicomplex numbers $$\mathcal{NC}=\{\lambda (1\pm ij ); \, \lambda\in \C\}.$$ The particular idempotent elements
$$ \eu  = \frac{1+ij}2 \quad \mbox{and} \quad \es  = \frac{1-ij}2$$
satisfy  
$ e_+^2=\eu $, $e_-^2=\es$, $\eu +\es  =1$, $\eu -\es  =ij$ and $\eu \es =0,$
so that any $Z= z_1 + j z_2\in \BC$ can be rewritten in a unique way as
\begin{align}\label{ib}
	Z= (z_1 - i z_2) \eu  +  (z_1 + i z_2) \es  = \alpha \eu  +  \beta \es
\end{align}
with $\alpha=z_1 - i z_2,\beta=z_1 + i z_2\in \C$.  The complex numbers $\alpha$ and $\beta$ are in fact the eigenvalue of $Z$ in the matrix representation. Moreover, the expression \eqref{ib} represents the corresponding diagonal matrix. 
The previous conjugates of a given $Z=\alpha \eu  + \beta \es $  read respectively $$Z^\dag=\beta \eu  + \alpha \es , \quad \widetilde{Z}=\overline{\beta}\eu  + \overline{\alpha}\es  \quad\mbox{and} \quad Z^*= \overline{\alpha}\eu  + \overline{\beta}\es .$$
Accordingly, the product of bicomplex numbers $Z=\alpha \eu  + \beta \es $  and $W=\alpha' \eu  + \beta' \es $ is given by $ ZW = \alpha\alpha' \eu  + \beta \beta' \es  $.
More details on algebraic properties can be found in \cite{Riley1953,Price1991,CatoniBoccalettiCannataCatoniNichelattiZampetti2008}.


To complete our review on the aforementioned bicomplex numbers, we should notice that such numbers can be realized also as the complexification of the so-called hyperbolic numbers defined as 
$$ \D := \{x + y k ; x,y \in \R\}  = \{x\eu + y\es; x,y \in \R\}; \, k:=ij.$$
Thus, according to the structure of bicomplex numbers, we define naturally a second kind of real part, the hyperbolic part. Notice for instance that, if  we let $\Re (\xi)$ denotes, as usual, the real part of a complex number $\xi\in \C$, then for $Z= z_1 + j z_2 
= \alpha \eu + \beta \es$, we define 
\begin{align}\label{realc}
	\Re_c(Z) 
	= \Re(z_1) = \frac 12 (\Re(\alpha) + \Re(\beta)) =\frac{1}{4}\left( Z+Z^\dag+ \widetilde{Z}+Z^*  \right) 
\end{align} 
to be the classical real part of $Z$. 
Analogously, we define hyperbolic part.

\begin{defn}
	We call hyperbolic (real) part of the bicomplex number $Z= z_1 + j z_2 = \alpha \eu + \beta \es$ the quantity given by  
	\begin{align}\label{realhyp} \Re_{hyp}(Z)  = \Re(\alpha) \eu  + \Re(\beta)\es = \Re(z_1) + \Im(z_2) k=\frac{1}{2}\left( Z+Z^*  \right)  .
	\end{align}	
\end{defn}

The next assertion (as well as their variants) is immediate. 

\begin{lem} 
	The following assertions hold trues.
	\begin{enumerate}
		\item $Z$ is real ($Z=\Re_c(Z)$) if and only if $Z=Z^* = Z^\dag$. This is also equivalent to $\alpha=\beta\in \R$.
		\item  $Z$ is hyperbolic ($Z=\Re_{hyp}(Z)\in \D$) if and only if $Z=Z^*$ (or also $Z^\dag = \widetilde{Z}$).
	\end{enumerate}
\end{lem}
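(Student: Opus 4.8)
The plan is to reduce both equivalences to componentwise comparisons in the idempotent basis $\{\eu,\es\}$, exploiting the uniqueness of the decomposition \eqref{ib}: two bicomplex numbers $\alpha\eu+\beta\es$ and $\alpha'\eu+\beta'\es$ coincide if and only if $\alpha=\alpha'$ and $\beta=\beta'$ in $\C$. The only inputs needed are the explicit idempotent forms of the three conjugates recorded above, namely $Z^\dagger=\beta\eu+\alpha\es$, $\widetilde{Z}=\overline{\beta}\eu+\overline{\alpha}\es$ and $Z^*=\overline{\alpha}\eu+\overline{\beta}\es$ for $Z=\alpha\eu+\beta\es$.

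First, for assertion (1), I would translate each equality into a condition on the pair $(\alpha,\beta)$. Comparing components gives $Z=Z^\dagger \Leftrightarrow \alpha=\beta$ and $Z=Z^*\Leftrightarrow \alpha=\overline{\alpha},\ \beta=\overline{\beta}$, i.e. $\alpha,\beta\in\R$; hence $Z=Z^*=Z^\dagger$ is equivalent to $\alpha=\beta\in\R$. It then remains to match this with the defining relation $Z=\Re_c(Z)$. Writing $\Re_c(Z)=\tfrac{1}{2}(\Re(\alpha)+\Re(\beta))(\eu+\es)$ by \eqref{realc} and using $1=\eu+\es$, the equality $Z=\Re_c(Z)$ forces $\alpha=\beta=\tfrac{1}{2}(\Re(\alpha)+\Re(\beta))$, whose right-hand side is real, so $\alpha=\beta\in\R$, the converse being immediate. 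This closes the chain of equivalences for (1).

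For assertion (2), the same bookkeeping applies. By \eqref{realhyp}, $\Re_{hyp}(Z)=\Re(\alpha)\eu+\Re(\beta)\es$, so $Z=\Re_{hyp}(Z)$ holds exactly when $\alpha=\Re(\alpha)$ and $\beta=\Re(\beta)$, that is $\alpha,\beta\in\R$, which is precisely the statement that $Z$ lies in $\D=\{x\eu+y\es;\,x,y\in\R\}$. Comparing components once more yields $Z=Z^*\Leftrightarrow \alpha,\beta\in\R$ and $Z^\dagger=\widetilde{Z}\Leftrightarrow \overline{\alpha}=\alpha,\ \overline{\beta}=\beta$, so all three conditions collapse to $\alpha,\beta\in\R$ and are therefore equivalent.

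The argument is elementary and I do not anticipate a genuine obstacle; the only point requiring a little care is the interplay between the two notions of real part. One must keep in mind that $\Re_c$ produces a number whose two idempotent components are forced to be equal (a genuine real multiple of $1=\eu+\es$), whereas $\Re_{hyp}$ keeps the two components independent. Verifying that $Z=\Re_c(Z)$ entails the stronger condition $\alpha=\beta$, and not merely $\alpha,\beta\in\R$, is exactly what distinguishes (1) from (2), and is the single substantive observation in the proof.
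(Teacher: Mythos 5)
Your proof is correct, and it is exactly the routine verification the paper has in mind: the paper states this lemma without proof (``The next assertion \dots is immediate''), and the intended check is precisely your componentwise comparison in the idempotent basis using $Z^\dagger=\beta\eu+\alpha\es$, $\widetilde{Z}=\overline{\beta}\eu+\overline{\alpha}\es$, $Z^*=\overline{\alpha}\eu+\overline{\beta}\es$. Your closing remark correctly isolates the one substantive point, namely that $Z=\Re_c(Z)$ forces the extra condition $\alpha=\beta$, which is what separates assertion (1) from assertion (2).
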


\subsection{Bicomplex holomorphic functions}
As in \cite{Price1991}, a bicomplex-valued function $f=f_1 +j f_2 $, on an open set $U\subset \BC$, is said to be bicomplex holomorphic (bc-holomorphic for short)  at a point $Z_0\in U$, and we write $f\in \mathcal{BH}ol(U)$, if it admits a bicomplex derivative at $Z_0$, i.e., if the limit
$$
\lim\limits_{\substack{ H \to 0 \\ H \notin \mathcal{NC}}} \frac{f(Z_0 + H) - f(Z_0)}{H}
$$
exists and is finite.
This is equivalent to say that the $\C$-valued functions $f_1$ and $f_2$ are holomorphic in the variables $z_1,z_2$ with $Z=z_1+jz_2$ and satisfy the Cauchy-Riemann system
$$
\left( \begin{array}{cc} \partial_{z_1}   & - \partial_{z_2}  \\ \partial_{z_2}   &  \partial_{z_1} \end{array}\right)
\left( \begin{array}{c} f_1  \\ f_2 \end{array}\right)= \left( \begin{array}{c} 0  \\ 0 \end{array}\right),
$$
where the shorthand $\partial_z$ is used to denote the differential operator 
\begin{equation}\label{crdo}
	\partial_{z} =\frac{\partial}{\partial z} =\frac{1}{2} \left( \frac{\partial}{\partial x} - i \frac{\partial}{\partial y}\right) ; \quad z= x+iy \in\C.
\end{equation}

The following characterization of the bc-holomorphicity is given in \cite{Rochon2008} and shows that holomorphic functions on the bicomplex space are once again solutions of a system of linear constant coefficients differential equations. Namely, a function $f\in \mathcal{C}^1(U)$ is bc-holomorphic on $U$ if and only if $f$ satisfies the following three systems of differential equations
$$
\frac{\partial f}{\partial Z^{*}} = \frac{\partial f}{\partial Z^{\dag}}  = \frac{\partial f}{\partial \widetilde{Z}}  = 0,
$$
where
\begin{align} 
	\frac{\partial}{\partial Z^{*}} &:= \frac{\partial }{\partial \overline {z_1}} + j  \frac{\partial }{\partial \overline {z_2}} = 
	\frac{\partial }{\partial \overline {\alpha}} \eu +   \frac{\partial }{\partial \overline {\beta}} \es  ,\label{Wirtingerstar}\\ 
	\frac{\partial}{\partial Z^{\dag}} &:=  \frac{\partial }{\partial z_1} + j  \frac{\partial }{\partial z_2}  = 
	\frac{\partial }{\partial \beta} \eu +   \frac{\partial }{\partial \alpha} \es  , \label{Wirtingerdagger}\\ 
	\frac{\partial f}{\partial \widetilde{Z}} &:= \frac{\partial }{\partial \overline {z_1}} - j  \frac{\partial }{\partial \overline {z_2}} = 
	\frac{\partial }{\partial \overline {\beta}} \eu +   \frac{\partial }{\partial \overline {\alpha}} \es  . \label{Wirtingerbar}
\end{align}
The above system is the foundation of the theory of bc-holomorphic functions.
Accordingly, we have the following characterization. 

\begin{thm}[{\cite[Theorem 15.5]{Price1991}}] \label{thmCharbcHolm} 
	A bicomplex-valued function $f$ is bc-holomorphic  if and only if it is of the form 
	\begin{align}\label{decomp:holom}
		f(Z)=f(\alpha \eu  + \beta \es ) = \phi^+(\alpha) \eu  + \phi^-(\beta) \es ,
	\end{align}
	for certain holomorphic $\C$-valued functions $\phi^\pm:\C \longrightarrow \C$.
\end{thm}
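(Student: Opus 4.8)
The plan is to reduce bc-holomorphy to the simultaneous vanishing of the three Wirtinger-type operators $\partial_{Z^*}$, $\partial_{Z^\dagger}$ and $\partial_{\widetilde{Z}}$ — the equivalence recalled just before the statement — and then to exploit the idempotent decomposition $\BC = \C\eu \oplus \C\es$. Writing the bicomplex-valued function in components as $f = f^+ \eu + f^- \es$ with $f^\pm$ complex-valued, and using the expressions \eqref{Wirtingerstar}–\eqref{Wirtingerbar} of the three operators in the eigencoordinates $\alpha, \beta$, I would show that each operator acts diagonally on the pair $(f^+, f^-)$, so that the three vector equations decouple into six scalar ones.

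Concretely, since $\eu, \es$ are constant idempotents with $\eu^2 = \eu$, $\es^2 = \es$ and $\eu\es = 0$, differentiation commutes with multiplication by them and the cross terms drop out. I would record the three identities
$$ \partial_{Z^*} f = (\partial_{\overline{\alpha}} f^+)\eu + (\partial_{\overline{\beta}} f^-)\es, \quad \partial_{Z^\dagger} f = (\partial_{\beta} f^+)\eu + (\partial_{\alpha} f^-)\es, \quad \partial_{\widetilde{Z}} f = (\partial_{\overline{\beta}} f^+)\eu + (\partial_{\overline{\alpha}} f^-)\es. $$
Because $\{\eu, \es\}$ is a basis over $\C$, the vanishing of the left-hand sides is equivalent to the vanishing of each complex coefficient. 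Collecting the three equations that constrain $f^+$ yields $\partial_{\overline{\alpha}} f^+ = \partial_{\beta} f^+ = \partial_{\overline{\beta}} f^+ = 0$, and those constraining $f^-$ yield $\partial_{\overline{\beta}} f^- = \partial_{\alpha} f^- = \partial_{\overline{\alpha}} f^- = 0$.

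From here I would conclude as follows. The vanishing of $\partial_{\beta} f^+$ and $\partial_{\overline{\beta}} f^+$ says that $f^+$ is (locally) independent of $\beta$, while $\partial_{\overline{\alpha}} f^+ = 0$ is exactly the Cauchy–Riemann equation in $\alpha$; hence $f^+ = \phi^+(\alpha)$ for a holomorphic $\phi^+$. Symmetrically $f^- = \phi^-(\beta)$ with $\phi^-$ holomorphic, which gives precisely \eqref{decomp:holom}. The converse is an immediate substitution: a function of the stated form has all three derivatives equal to zero by the same diagonal computation. The only genuinely non-formal step is the passage from \emph{``all partials in the spurious variables vanish''} to \emph{``genuine independence of those variables''}, which requires working on a product-type or connected neighborhood (for instance a polydisc) so that vanishing gradients force constancy slice-by-slice; this is where a little care about the shape of the domain $U$ is needed, and it is the main (and rather mild) obstacle in the argument.
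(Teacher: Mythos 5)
Your proof is correct, but it cannot be ``the same as the paper's'' for a simple reason: the paper does not prove this statement at all --- it imports it verbatim as Theorem 15.5 of Price's book \cite{Price1991}, just as it imports the three-operator characterization $\partial_{Z^*}f=\partial_{Z^\dagger}f=\partial_{\widetilde{Z}}f=0$ from \cite{Rochon2008}. What you have done is supply a self-contained derivation of the former from the latter, and the derivation is sound: your three diagonal identities
\begin{align*}
\partial_{Z^*} f = (\partial_{\overline{\alpha}} f^+)\eu + (\partial_{\overline{\beta}} f^-)\es, \qquad
\partial_{Z^\dagger} f = (\partial_{\beta} f^+)\eu + (\partial_{\alpha} f^-)\es, \qquad
\partial_{\widetilde{Z}} f = (\partial_{\overline{\beta}} f^+)\eu + (\partial_{\overline{\alpha}} f^-)\es
\end{align*}
are exactly the action described in Remark \ref{remact}, and the decoupling into six scalar equations, followed by ``independence of the spurious variable plus Cauchy--Riemann in the remaining one,'' is precisely the technique the authors themselves deploy later (in the proofs of Theorem \ref{thmp000}, Proposition \ref{propmax1kind} and Proposition \ref{propChar}). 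Two remarks on the caveats you flag. First, your worry about the shape of the domain is genuine but vacuous here: the theorem is stated for functions on all of $\BC$, where the identification $\BC\simeq \C\eu\oplus\C\es$ is a global product, so vanishing of $\partial_\beta f^+$ and $\partial_{\overline\beta} f^+$ on connected $\beta$-slices does give $f^+=\phi^+(\alpha)$; for a general open $U$ this is exactly why Price restricts to product-type (cartesian) domains, and your proof would only yield the local statement. Second, be aware that your argument is only as unconditional as its starting point: the Rochon characterization is stated in the paper for $f\in\mathcal{C}^1(U)$, so your chain of equivalences silently carries that regularity hypothesis, whereas Price's Theorem 15.5 works directly from the existence of the bicomplex derivative. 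Neither point is a gap in substance; they only delimit what your shorter route actually proves.
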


This provides us with another key tool that we use to extend many known results in theory of holomorphic functions to the bicomplex setting. 

\subsection{bc-harmonic functions}
Harmonic functions are usually defined as those belonging to the kernel of a Laplace operator. Thus, for the complex plane $\C\equiv \R^2$ the Laplace operator reads 
\begin{align}	\label{lapz}
	\Delta_z=\frac 14 \left( \frac{\partial^{2}}{\partial x^{2}}+\frac{\partial^{2}}{\partial y^{2}}  \right) = \frac{\partial^{2}}{\partial z \partial \overline{z}}; \, \, z= x+iy. 
\end{align}
To define harmonic functions in the bicomplex setting, we need to specify the bicomplex Laplace operator we will be working with, as there are many possible second order differential operators that may play this role \cite{AlpayElizarrarasshapiroStruppa2014}. The existence of different conjugations suggests the consideration of the following
\begin{align*}
& \Delta_{1} := \frac{\partial^2}{\partial Z \partial Z^{*}},  
&\Delta_{2}:= \frac{\partial^2}{\partial Z \partial Z^\dagger} ,  
&  \,\,\,\,\,\, \Delta_{3}:= \frac{\partial^2}{\partial Z \partial \widetilde{Z}} , \,\, 
\qquad  \Delta_{4}:= \frac{\partial^2}{\partial Z^{*} \partial Z^\dagger},\\ 
&\Delta_{5} := \frac{\partial^2}{\partial Z^{*} \partial \widetilde{Z}} ,
&\Delta_{6} := \frac{\partial^2}{\partial Z^\dagger \partial \widetilde{Z}}, \,\, \,\,\,\,\,\,
& \Delta_{7} := \frac{\partial^2}{\partial  Z \partial Z^{*} }   +  \frac{\partial^2}{\partial Z^\dagger \partial \widetilde{Z}} , 
\end{align*}
where 
\begin{align} 
	\frac{\partial}{\partial Z} &:= \frac{\partial }{\partial z_1} - j  \frac{\partial }{\partial z_2} = 
	\frac{\partial }{\partial \alpha} \eu +   \frac{\partial }{\partial \beta} \es  \label{Wirtinger}.
\end{align}


\begin{defn}
	A sufficiently differentiable bicomplex-valued function $f= f^+ \eu + f^-\es$ is said to be bicomplex harmonic with respect to the bicomplex Laplace operator $\Delta_{\bullet}$ (or $\Delta_{\bullet}$-harmonic for short), if it belongs to the kernel of such operator, i.e., $\Delta_{\bullet}f=0$.  
\end{defn}

	 Lemma \ref{lemreducytion} below is a reduction result. It answers the eventual question of what 
kind of bc-harmonicity one obtains when considering one of the bicomplex Laplace operators
$\Delta_{s}$, $s=1,2,\cdots,7$.  
To this end, we begin by defining  the operational bicomplex  conjugation  $T{^{*_{op}}}$,   $T{^{\dagger_{op}}}$ and  $T{^{\widetilde{}_{op}}}$ for a bicomplex operators $T$  satisfying $T(\alpha f +\beta g)=\alpha T(f) + \beta T(g)$ for $\alpha,\beta\in \BC$. 
Thus, for the bicomplex operators on bicomplex functional spaces taking the form $ T= A_1 +jA_2$ for certain  complex operators $A_1$ and $A_2$, we define 
 $$T{^{*_{op}}} = \left( A_1 +jA_2  \right){^{*_{op}}} := \overline{A_1} - j \overline{A_2}  ,$$   $$T{^{\dagger_{op}}} = \left(A_1 +jA_2 \right){^{\dagger_{op}}}  := A_1 - jA_2 $$ and  $$T{^{\widetilde{}_{op}}} = \left( A_1 +jA_2 \right) {^{\widetilde{}_{op}}} := \overline{A_1} + j \overline{A_2} .$$  
 The suggested operational bicomplex conjugates satisfy
 \begin{align} \label{conj4}
 	&\left( T{^{\bullet_{op}}}\right) ^{\bullet_{op}} = T ,
 \end{align}
 as well as the rotational rules ${^{{\bullet^1}_{op}}}{^{{\bullet^2}_{op}}}={^{{\bullet^3}_{op}}} $. That is
\begin{align} 
&	\left( {T^{*_{op}}}\right) ^{\dagger_{op}} = \left( {T^{\dagger_{op}}}\right) ^{*_{op}}=T{^{\widetilde{}_{op}}},\label{conj1} \\ 
&\left({T^{*_{op}}}\right)^{\widetilde{}_{op}} =\left( {T{^{\widetilde{}_{op}}}}\right) ^{*_{op}}= T{^{\dagger_{op}}}, \label{conj2}\\
&\left( {T^{\dagger_{op}}}\right) ^{\widetilde{}_{op}}=\left( {T{^{\widetilde{}_{op}}}}\right) ^{\dagger_{op}}= T^{*_{op}}. \label{conj3} 
\end{align}
Accordingly, 
the elementary bicomplex first order differential operators in \eqref{Wirtingerstar}, \eqref{Wirtingerdagger} and \eqref{Wirtingerbar} are exactly the different operational bicomplex conjugate of the one in \eqref{Wirtinger}. Indeed, we have 
\begin{align} \label{conj4z}
	\frac{\partial}{\partial Z^{*}}  
	= \left( 	\frac{\partial}{\partial Z}\right) {^{*_{op}}}, \,\, 
	\frac{\partial}{\partial Z^{\dag}}
	= \left( 	\frac{\partial}{\partial Z}\right){^{\dagger_{op}}},\,\,  
	\frac{\partial f}{\partial \widetilde{Z}} 
	= 
	\left( 	\frac{\partial}{\partial Z}\right){^{\widetilde{}_{op}}} . 
\end{align}

\begin{rem}\label{remact}
The idempotent decomposition of $ T= A_1 +jA_2$ is given by $ T= A_+ \eu + A_-\es$, where $A_+=  A_1 -iA_2$ and $ A_-= A_1 + iA_2$. The action of $T$ on the bicomplex-valued function $f=f^+\eu + f^-\es$; $f^\pm=f_1 \mp if_2:\BC \longrightarrow \C$ is described as follows $$ T f= A_1f +jA_2f = A_+ f \eu + A_-f\es = A_+f^+ \eu + A_- f^-\es.$$
\end{rem}

\begin{lem} \label{lemreducytion}
	The bicomplex harmonicity with respect to $\Delta_{r}$ and $\Delta_{s}$ for positive integers $r$ and $r$ such that $r+s=7$ are equivalent. More precisely, if $f$ is a bicomplex-valued function, then  
	\begin{itemize}
		\item[(i)] $f$ is $\Delta_{6}$-harmonic if and only if $f^\dagger$ is $\Delta_{1}$-harmonic.
		\item[(ii)] $f$ is $\Delta_{5}$-harmonic if and only if $f^*$ is $\Delta_{2}$-harmonic.
		\item[(iii)] $f$ is $\Delta_{4}$-harmonic if and only if $f^*$ is $\Delta_{3}$-harmonic.
	\end{itemize}
\end{lem}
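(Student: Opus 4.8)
The plan is to prove the three explicit equivalences (i)--(iii), which is what makes precise the loose phrase that $\Delta_r$- and $\Delta_s$-harmonicity with $r+s=7$ ``are equivalent''. Writing $\partial_Z:=\frac{\partial}{\partial Z}$ as in \eqref{Wirtinger} and using the shorthands $\partial_{Z^*},\partial_{Z^\dagger},\partial_{\widetilde{Z}}$ for the operators \eqref{Wirtingerstar}, \eqref{Wirtingerdagger}, \eqref{Wirtingerbar}, I would reduce everything to the three operator identities
\begin{align*}
 \Delta_1{^{\dagger_{op}}}=\Delta_6,\qquad \Delta_2{^{*_{op}}}=\Delta_5,\qquad \Delta_3{^{*_{op}}}=\Delta_4 .
\end{align*}
Granting these, together with the compatibility rule between operator and function conjugations discussed below, one gets for every $g$ the intertwining $(\Delta_1 g)^{\dagger}=\Delta_1{^{\dagger_{op}}}g^{\dagger}=\Delta_6\, g^{\dagger}$; specializing to $g=f^{\dagger}$ and using the involution property $(f^{\dagger})^{\dagger}=f$ yields $\Delta_6 f=(\Delta_1 f^{\dagger})^{\dagger}$. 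Since $w^{\dagger}=0\iff w=0$, this gives at once $\Delta_6 f=0\iff \Delta_1 f^{\dagger}=0$, which is (i). Cases (ii) and (iii) follow verbatim from the remaining two identities, with $*$ in place of $\dagger$ and $f^{*}$ in place of $f^{\dagger}$, using $(f^{*})^{*}=f$.

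Second, I would establish the technical ingredient, namely the compatibility rule
\begin{align*}
 (Tg)^{\dagger}=T{^{\dagger_{op}}}\,g^{\dagger},\qquad (Tg)^{*}=T{^{*_{op}}}\,g^{*}
\end{align*}
for a bicomplex operator $T=A_1+jA_2$ and any sufficiently differentiable $g$. The cleanest route is the idempotent picture of Remark \ref{remact}: writing $T=A_+\eu+A_-\es$ and $g=g^+\eu+g^-\es$, one checks that $T{^{\dagger_{op}}}=A_-\eu+A_+\es$ and $g^{\dagger}=g^-\eu+g^+\es$ (the dagger swaps idempotent components), while $T{^{*_{op}}}=\overline{A_+}\eu+\overline{A_-}\es$ and $g^{*}=\overline{g^+}\eu+\overline{g^-}\es$ (the star conjugates them). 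Each identity then collapses, component by component, to $\overline{A_\pm\, g^\pm}=\overline{A_\pm}\,\overline{g^\pm}$, which holds because the Wirtinger operators obey $\overline{\partial_z h}=\partial_{\bz}\overline{h}$. I expect this verification, elementary as it is, to be the main point requiring care, since it is exactly where one must keep separate the conjugation of the \emph{coefficients} of $T$ from the conjugation of its \emph{argument} $g$.

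Third, I would derive the three operator identities purely formally from \eqref{conj4z} and the rotational rules \eqref{conj1}--\eqref{conj3}, using that $\bullet_{op}$ is multiplicative on composable operators (immediate from its definition, the centrality of $j$, and the commutativity of $\BC$) and involutive by \eqref{conj4}. For (i), since $\Delta_1=\partial_Z\partial_{Z^*}$ one has $\Delta_1{^{\dagger_{op}}}=(\partial_Z){^{\dagger_{op}}}(\partial_{Z^*}){^{\dagger_{op}}}$; here $(\partial_Z){^{\dagger_{op}}}=\partial_{Z^\dagger}$ by \eqref{conj4z}, and $(\partial_{Z^*}){^{\dagger_{op}}}=\big((\partial_Z){^{*_{op}}}\big){^{\dagger_{op}}}=(\partial_Z){^{\widetilde{}_{op}}}=\partial_{\widetilde{Z}}$ by \eqref{conj1}, so $\Delta_1{^{\dagger_{op}}}=\partial_{Z^\dagger}\partial_{\widetilde{Z}}=\Delta_6$. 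For (ii), $\Delta_2{^{*_{op}}}=(\partial_Z){^{*_{op}}}(\partial_{Z^\dagger}){^{*_{op}}}=\partial_{Z^*}\,\partial_{\widetilde{Z}}=\Delta_5$, where $(\partial_{Z^\dagger}){^{*_{op}}}=(\partial_Z){^{\widetilde{}_{op}}}=\partial_{\widetilde{Z}}$ again by \eqref{conj1}. For (iii), $\Delta_3{^{*_{op}}}=(\partial_Z){^{*_{op}}}(\partial_{\widetilde{Z}}){^{*_{op}}}=\partial_{Z^*}\,\partial_{Z^\dagger}=\Delta_4$, where $(\partial_{\widetilde{Z}}){^{*_{op}}}=(\partial_Z){^{\dagger_{op}}}=\partial_{Z^\dagger}$ by \eqref{conj2}. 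Combining these with the reduction of the first paragraph finishes the proof, and the symmetric pairs $(\Delta_r,\Delta_s)$ with $r+s=7$ are thereby seen to define equivalent bicomplex harmonicities, up to applying the appropriate conjugation to the function.
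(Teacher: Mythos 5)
Your proposal is correct and takes essentially the same approach as the paper: the paper's proof likewise rests on the operational conjugation identities pairing $\Delta_1$ with $\Delta_6$, $\Delta_2$ with $\Delta_5$, and $\Delta_3$ with $\Delta_4$, combined with the intertwining rule $T{^{\dagger_{op}}}f=(Tf^\dagger)^\dagger$ (resp.\ its $*$-analogue), arriving at the same key identities $\Delta_6 f=(\Delta_1 f^\dagger)^\dagger$, $\Delta_5 f=(\Delta_2 f^*)^*$ and $\Delta_4 f=(\Delta_3 f^*)^*$. The only difference is that you verify explicitly, via the idempotent decomposition, the operator--function compatibility and the multiplicativity of ${\bullet_{op}}$, which the paper uses without proof.
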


\begin{proof}
	Using the operational bicomplex conjugation rules \eqref{conj1}, \eqref{conj2}, \eqref{conj3} and \eqref{conj4} for bicomplex operators, it is clear that the considered bicomplex Laplace operators are connected to each other by
	\begin{align}
		&	\Delta_{1} = \Delta_{1}^{*} = \widetilde{\Delta_{6}} = \Delta_{6}^\dagger     \label{id1} \\
&	\Delta_{2}^{*}  = \widetilde{\Delta_{2}}   =\Delta_{5} = \Delta_{5}^\dagger, \label{id2}  \\    
&	\Delta_{3}^\dagger   =
	\Delta_{3}^{*}   =  \Delta_{4} = \widetilde{\Delta_{4}}, \label{id3}   
	.
\end{align}
	Moreover, using \eqref{id1} we get  
	$$\Delta_{6} f  = \widetilde{\Delta_{6}} ^\dagger f  = (\widetilde{\Delta_{6}} f^\dagger)^\dagger = ( \Delta_{1} f^\dagger)^\dagger  .$$
	Therefore, a bicomplex-valued function 
	 $f$ is $\Delta_{6} $-harmonic if and only if $f^\dagger$ is $\Delta_{1}$-harmonic (proving $(i)$).	 
	 In a similar way, by means of \eqref{id2} we get
	$$    
	\Delta_{5} f = \widetilde{\Delta_{5}} ^* f    =  (\widetilde{\Delta_{5}} f^*)^* = ( \Delta_{2} f^*)^* , $$
 so that $f$ is $\Delta_{5} $-harmonic if and only if $f^*$ is $\Delta_{2}$-harmonic.	This proves $(ii)$. Assertion $(iii)$ follows since from \eqref{id3} one obtains 
 $$    
 \Delta_{4} f = (\Delta_{4}^\dagger) ^* f    =  ( \Delta_{4}^\dagger f^*)^* = ( \Delta_{3} f^*)^*. $$ 
\end{proof}

\begin{rem}
The identities  \eqref{id1}, \eqref{id2}  and \eqref{id3} read equivalently 
$$\Delta_{2} = \Delta_{2}^\dagger=\Delta_{5}^{*}  = \widetilde{\Delta_{5}}   , \\    
\Delta_{3} = \widetilde{\Delta_{3}} = \Delta_{4}^\dagger   =
\Delta_{4}^{*}   , \\
\Delta_{6} = \Delta_{6}^{*}  = \widetilde{\Delta_{1}} = \Delta_{1}^\dagger   
.$$
\end{rem}

%
		
\begin{rem}
	Lemma \ref{lemreducytion} reveals that we need only to study the bicomplex harmonicity  with respect to the 
	bicomplex Laplace operators $\Delta_{1}$, $\Delta_{2}$ and $\Delta_{3}$. 
	We have excluded $\Delta_{7}$ for being reducible to the classical two-dimensional Laplacian in   $\alpha,\beta$. Indeed, we have 
	  \begin{align}
	 	\label{lap7} \Delta_{7} 
	 	= \Delta_{bc} + \Delta_{6} &= \Delta_{bc} + (\Delta_{bc})^\dagger= \Delta_{\alpha}+\Delta_{\beta}.
	 \end{align}
 Notice also that the operator $\Delta_{2}$ is purely complex  since  
 \begin{align*}
 	\Delta_{2} &:= \frac{\partial^2}{\partial Z \partial Z^\dagger}  = \frac{\partial^2}{\partial \alpha \partial \beta}   = \frac{\partial^2}{\partial z_1^2 } + \frac{\partial^2}{\partial z_2^2},
 \end{align*}
 and can be seen as a specific Cartesian  Laplace operator on $\C^2$ that needs to be studied in its complex context. 
 Accordingly, we claim that in the bicomplex setting they are essentially two bicomplex Laplace operators to be studied,  $\Delta_{1}$ and $\Delta_{3}$. 
 \end{rem}

\begin{rem} 	
	%
	%
	The explicit expression of $\Delta_{3}$ in the $z_1$ and $z_2$ variables is given by  
	 \begin{align*} \Delta_{3} 
	 	& = \left( \frac{\partial^2}{\partial z_1 \partial \overline{z_1}} - \frac{\partial^2}{\partial z_2 \partial \overline{z_2}}
	 	\right)  Id_{\BC} +  2i \sigma_{\BC}  \Re  \left( \frac{\partial^2}{\partial z_1 \partial \overline{z_2}}\right) ,
	 \end{align*}
 where $Id_{\BC}$ is the identity operator and $ \sigma_{\BC}$ represents the multiplication operator by $k=ij$, $$\sigma_{\BC} (f^+\eu+f^-\es) = k (f^+\eu+f^-\es) = f^+\eu-f^-\es.$$ 
	\end{rem}

From now on, we privilege the Laplace operator $\Delta_{bc}:= \Delta_{1}$ which can be seen as a bicomplex analog of the complex Laplace operator in \eqref{lapz}. Accordingly, we preserve the notion of bc-harmonic functions to those taken with respect to $\Delta_{bc}$.  
%
Its explicit expression with respect to the idempotent decomposition simply reads  
$$\Delta_{bc} :=  \Delta_{\alpha}e^{+}+\Delta_{\beta}e^{-},$$
while the one in terms of the initial Cartesian coordinates $z_1$ and $z_2$ with $Z=z_1+jz_2$
is given by
\begin{align*}
	\Delta_{bc} 
	& = \left(  \frac{\partial^2}{\partial z_1 \partial \overline{z_1}} +  \frac{\partial^2}{\partial z_2 \partial \overline{z_2}}  \right) Id_{\BC} +  2 \sigma_{\BC}  \Im  \left( \frac{\partial^2}{\partial z_1 \partial \overline{z_2}}\right)  .
\end{align*}


%
With the above data we can now discuss our motivating counterexample showing that the classical assertion concerning for the real-valued bc-harmonic functions fails.

\begin{ex}	
 Consider the function  $F_1(\alpha e^{+}+\beta e^{-})=(\alpha+\overline{\alpha})(\beta+\overline{\beta})$ on $\BC$, which is clearly a real-valued function and $\Delta_{bc}$-harmonic, $\Delta_{bc}F_1=0$. 
Thus, if we assume that $F_1=\Re_c(\psi) = \frac 12 \Re\left(\psi^+(\alpha) +\psi^-(\beta) \right)$ for certain bc-holomorphic function  $\psi(\alpha \eu +  \beta \es)=\psi^+(\alpha) \eu + \psi^-(\beta) \es$ with $\psi^\pm 
\in \mathcal{H}ol(\C)$, and take the action of $\partial_{\overline{\alpha}}f$ and  $\partial_{\overline{\beta}} f$ (to be understood in the sense of \ref{remact}, since $\partial_{\overline{\alpha}}  = \partial_{\overline{\alpha}} \eu + \partial_{\overline{\alpha}} \es$), we get a contradiction. Indeed,
$$1= \partial_{\overline{\alpha}} \partial_{\overline{\beta}} F_1 = \frac 12 \partial_{\overline{\alpha}} \partial_{\overline{\beta}}  \Re\left(\psi^+(\alpha) +\psi^-(\beta) \right)= 0.$$
Although, the considered function can be realized as the real part of  $$G_1(\alpha e^{+}+\beta e^{-}) := 2\left( \overline{\alpha}(\beta+\overline{\beta}) \eu +\overline{\beta}(\alpha+\overline{\alpha})\es\right) = 2(\varphi\eu +\psi \es) ,$$ where $\varphi(\alpha,\beta) := \overline{\alpha}(\beta+\overline{\beta})$ and  $\psi(\alpha,\beta):=\overline{\beta}(\alpha+\overline{\alpha})$.
The constructed bicomplex-valued function $G_1$  is not bc-holomorphic (at least by direct computation). Moreover, we have  
$\Re_c(G_1)=2(\Re \varphi+ \Re \psi) = F_1$, 
which is bc-harmonic and belongs to $\ker \partial_{Z^*}^{2}$, $\ker\partial_{Z^\dagger}^{2}$ and $\ker\partial_{\overline{Z}}^{2}$. 
\end{ex}

This counterexample shows in particular that a  real-valued $\Delta_{bc}$-harmonic function does not need to be the real part of a bc-holomorphic function but perhaps a special kind of bicomplex polyanalytic function. 
To this aim, Theorem \ref{thmp000} discusses the existence of bc-holomorphic functions which are the real part of the given $\Delta_{bc}$-harmonic functions, and clarifies, partially, the previous counterexample. We present below a direct proof.


\begin{proof}[Proof of Theorem \ref{thmp000}]
	Assume that $F=\Re_{hyp} f : \BC \longrightarrow \D$ for some given  bc-holomorphic, $\partial_{Z^*} f = \partial_{\widetilde{Z}} f = \partial_{Z^\dagger} f= 0$. By observing that this system is also equivalent to  
	$\partial_{Z} f^* = \partial_{Z^\dagger} f^* = \partial_{\widetilde{Z}} f^*= 0$, we get  
	$ \partial_{Z^\dagger} F = \partial_{Z^\dagger} \Re_{hyp} f = \frac 12  \partial_{Z^\dagger}  \left(  f + f^* \right) = 0$ as well as $  \partial_{\widetilde{Z}} F = \frac 12 \partial_{\widetilde{Z}}  \left(  f + f^* \right) =0$. 	This proves $ F \in  \ker(\partial_{\widetilde{Z}}) \cap \ker(\partial_{Z^\dagger})$.
	Conversely, let $F: \BC \longrightarrow \D$ with
	$$ F(\alpha\eu + \beta\es) = F^+(\alpha\eu + \beta\es) \eu + F^-(\alpha\eu + \beta\es) \es$$ such that   $F \in \ker(\Delta_{bc})  \cap \ker(\partial_{\widetilde{Z}}) \cap \ker(\partial_{Z^\dagger})$. 
	From $\partial_{\widetilde{Z}} F= \partial_{Z^\dagger}F=0$, it is clear that the idempotent component $F^+:\BC \longrightarrow  \R$ is independent of $\beta$ and $\overline{\beta}$, while $F^-:\BC \longrightarrow  \R$ is independent of $\alpha$ and $\overline{\alpha}$. We write 
	$ F(\alpha \eu + \beta \es) = G^+(\alpha) \eu + G^-(\beta) \es,$
	where $G^+, G^-  :\C \longrightarrow  \R$ are real-valued functions in $\C$ defined by $G^+(\alpha) =  F^+ (\alpha \eu + \beta \es)$ and $G^-(\beta) = F^- (\alpha \eu + \beta \es)$, respectively.  
	Therefore, since the bc-harmonicity of $F$ is equivalent to the harmonicity in $\C$ of $ G^+$ and $G^-$,  with respect to $\Delta_\alpha$ and $\Delta_\beta$  respectively, there exist two holomorphic functions   $\varphi^+,\varphi^-:\C \longrightarrow \C$  such that  
	$ G^\pm   = \Re (\varphi^\pm)$.
	Therefore,   
	$$F(\alpha \eu +  \beta \es) = 
	\Re_{hyp}\left( \varphi^+(\alpha)  \eu +   \varphi^-(\beta)  \es \right) ,$$
	where $\varphi^+(\alpha)  \eu +   \varphi^-(\beta)  \es $ is clearly a bc-holomorphic thanks to Theorem \ref{thmCharbcHolm}.	
\end{proof}

Now, since for real-valued functions $F$ we have $F^+=F^-$, we conclude the following.

\begin{cor}\label{corcst}
	The only real-valued functions that are  bc-harmonic and realizable as the (standard) real part of bc-holomorphic functions are the constants.  
\end{cor}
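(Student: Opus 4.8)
The plan is to read the corollary off from Theorem \ref{thmp000}, using the observation recorded immediately before the statement that a real-valued function $F$ has coinciding idempotent components $F^+ = F^-$. First I would place a real-valued $F$ in the setting of that theorem: since $\R \subset \D$, such an $F$ is hyperbolic-valued, and for it the standard and hyperbolic real parts coincide ($\Re_c(F) = \Re_{hyp}(F) = F$). Thus realizing $F$ as the real part of a bc-holomorphic function puts us precisely in the situation $F = \Re_{hyp}(\psi)$ for a bc-holomorphic $\psi$, with $\Delta_{bc}F = 0$, so Theorem \ref{thmp000} applies and yields $F \in \ker(\partial_{\widetilde{Z}}) \cap \ker(\partial_{Z^\dagger})$.

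Next I would extract the variable-separation already obtained inside the proof of Theorem \ref{thmp000}. The vanishing $\partial_{\widetilde{Z}} F = \partial_{Z^\dagger} F = 0$ makes the component $F^+$ independent of $\beta, \overline{\beta}$ and the component $F^-$ independent of $\alpha, \overline{\alpha}$, so that $F = G^+(\alpha)\eu + G^-(\beta)\es$ with $G^+\colon\C\to\R$ a function of $\alpha$ alone and $G^-\colon\C\to\R$ a function of $\beta$ alone. (Equivalently, this separation is immediate from the structure Theorem \ref{thmCharbcHolm}, which gives $\psi = \psi^+(\alpha)\eu + \psi^-(\beta)\es$ and hence $F = \Re(\psi^+(\alpha))\eu + \Re(\psi^-(\beta))\es$.)

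The decisive step is then to feed in real-valuedness. Because $F^+ = F^-$, we obtain $G^+(\alpha) = G^-(\beta)$ for all $\alpha, \beta \in \C$; a function of $\alpha$ alone that equals a function of $\beta$ alone must be constant, so $G^+ \equiv G^- \equiv c$ for a single real constant $c$ and therefore $F \equiv c(\eu + \es) = c$. The converse is trivial, since every real constant is $\Delta_{bc}$-harmonic and is the real part of itself regarded as a constant bc-holomorphic function; hence the constants are exactly the functions described.

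I expect the only point genuinely needing care to be the first one, namely making explicit that for a real-valued target the relevant notion of ``real part'' is the one governed by Theorem \ref{thmp000}, so that its conclusion and the attendant separation of the idempotent components into functions of the disjoint variable sets $\{\alpha,\overline{\alpha}\}$ and $\{\beta,\overline{\beta}\}$ are available. Once that identification is in place, everything else is the elementary remark that two functions of disjoint variables can agree identically only as constants, which is what collapses $F$ to a scalar.
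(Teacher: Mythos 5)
The decisive gap is in your first step; the rest of your argument is sound and is in fact exactly the paper's. The paper's entire proof of this corollary is the single observation that $F^+=F^-$ for real-valued $F$, fed into the proof of Theorem \ref{thmp000}: the conditions $\partial_{\widetilde{Z}}F=\partial_{Z^\dagger}F=0$ separate the variables as $F=G^+(\alpha)\eu+G^-(\beta)\es$, and equality of two functions of disjoint variables forces constancy --- your second and third steps verbatim. The problem is the bridge you build to get into that setting. From the identity $\Re_c(F)=\Re_{hyp}(F)=F$, valid because $F$ is real-valued, you conclude that a realization $F=\Re_c(\psi)$ ``puts us precisely in the situation $F=\Re_{hyp}(\psi)$''. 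That is a non sequitur: the two real parts agree on the \emph{real} argument $F$, but they are different operations on the bc-holomorphic function $\psi$, namely $\Re_c(\psi)=\frac{1}{2}\left(\Re(\psi^+)+\Re(\psi^-)\right)$, a single real-valued function of $(\alpha,\beta)$, versus $\Re_{hyp}(\psi)=\Re(\psi^+)\eu+\Re(\psi^-)\es$. Concretely, let $\psi(Z)=Z\eu=\alpha\eu$, which is bc-holomorphic by Theorem \ref{thmCharbcHolm}. Then $F:=\Re_c(\psi)=\frac{1}{2}\Re(\alpha)$ is real-valued, non-constant and $\Delta_{bc}$-harmonic, while $\Re_{hyp}(\psi)=\Re(\alpha)\eu\ne F$ is not even real-valued; moreover no bc-holomorphic $\psi'$ whatsoever satisfies $\Re_{hyp}(\psi')=F$, since the $\es$-component of any hyperbolic real part is a function of $\beta$ alone, whereas $F$ depends on $\alpha$. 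Only the opposite implication holds: if $F=\Re_{hyp}(\psi)$ happens to be real-valued, then $\Re(\psi^+)=\Re(\psi^-)$ and consequently $F=\Re_c(\psi)$ as well.

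This is not a repairable technicality under your reading of the statement: the same example shows that the corollary, interpreted literally with $\Re_c$-realizability as you interpret it, is contradicted --- $\frac{1}{2}\Re(\alpha)$ is a non-constant, real-valued, bc-harmonic standard real part of a bc-holomorphic function. The only statement the paper's one-line derivation supports, and the way the corollary must be read, is with realizability as the \emph{hyperbolic} real part, i.e.\ with the hypothesis of Theorem \ref{thmp000}: a real-valued $\Delta_{bc}$-harmonic function of the form $F=\Re_{hyp}(\psi)$, $\psi$ bc-holomorphic, is constant. Under that hypothesis your first step is unnecessary, and the remainder of your proof (the separation of variables, the disjoint-variables collapse, and the trivial converse) coincides with the paper's argument.
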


\begin{rem}
This corollary justifies, some how,  the assertion of the counterexample discussed above.
\end{rem}

\section{bc-polyholomorphic functions (of finite order)}
We denote by $\mathcal{H}ol(\Omega)$ the space of holomorphic functions on an open set $\Omega$ in the complex plane $\C$, i.e., those $f:\Omega\longrightarrow \C$ satisfying the Cauchy-Riemann equation $\partial_{\bz} f=0$, where $\partial_{\bz}$ is as in \eqref{crdo}.
 As a specific generalization is the space of 
polyanalytic functions $f$ on $\Omega$ of order $n$ satisfying the generalized Cauchy-Riemann equation  $\overline{\partial}^{n} f =0$ (see \cite{Burgatti1922}), so that for $n=1$ we recover  $\mathcal{H}ol(\Omega)$.
This space is exactly the set of polynomial functions in $\overline{z}$ of degree less or equal to $n-1$ whose coefficients are holomorphic functions in $z$. 
   In the sequel, we deal with those that are of exact order $n$, 
$$  A_n^z[\overline{z}|\Omega] := \left\{ \sum_{k=0}^{n-1}a_{k}(z)\overline{z}^{k}; \, a_{k} \in \mathcal{H}ol(\Omega), \, a_{n-1} \not\equiv 0 , \, k=0,1,\cdots ,n-1\right\}.$$
We simply denote $A_n^z[\overline{z}]$ when $\Omega$ is the whole complex plane.
The sphere of intervention of these functions  includes many branches in mechanics and mathematical physics  \cite{Kolossov1908,Kolossov1909a,
	Muskhelishvili1977,Wendland1979,HaimiHedenmalm2013,AbreuFeichtinger2014}.
For a complete survey for their basic properties and their different applications one can refer to \cite{Balk1991} (see also \cite{AbreuFeichtinger2014}).

This notion of polyanalyticity can be extended in many natural ways to the bicomplex setting. Notice for instance that in analogy to $A_n^z[\overline{z}|\Omega]$, we set 
$$  A_n^Z[Z^*|U] := \left\{ \sum_{k=0}^{n-1} {Z^*}^{k}g_k  ;   g_k \in \mathcal{BH}ol(U),  g_{n-1}\not\equiv 0,    k=0,1,\cdots ,n-1\right\}$$
for a given open set $U$ in the bicomplex space $\BC$.

\begin{defn}\label{defbcnZpoly}
	The elements of $A_n^Z[Z^*|U]$ will be called $n$-$Z^*$-bc-polyholomorphic functions.
\end{defn}

Now, thanks to the idempotent decomposition we can suggest 
 $$ \mathcal{A}^{[1]}_{m,n}(U) :=  A_m^\alpha[\overline{\alpha}|U^+_\C] \eu + A_n^\beta[\overline{\beta}|U^-_\C] \es,$$
 i.e., the set of bicomplex-valued functions on $U$ that may be represented in the form
 $ f(z)= \varphi(\alpha) \eu + \psi(\beta)\es$ 
 for given $\varphi\in A_m^\alpha[\overline{\alpha}|U^+_\C]$ and $\psi\in A_n^\beta[\overline{\beta}|U^-_\C]$, 
where  $U^+_\C$ and $U^-_\C$ are the open sets in $\C$ defined by $U^+_\C=\{\alpha; \alpha\eu \in E^+ U\}$ and $U^-_\C=\{\beta; \beta\es \in E^- U\}$. Here $E^+$ and $E^-$ denote the continuous multiplication operators by $\eu$ and $\es$,  respectively.

\begin{defn}\label{defbcpolhol1kind}
	A bicomplex-valued function $f\in  \mathcal{A}^{[1]}_{m,n}(U) $ is called bicomplex polyholomorphic (or bc-polyholomorphic for short) on $U$  of order $(m,n)$ of first kind.
\end{defn}

	Notice for instance that from Definition \ref{defbcpolhol1kind}, it is easy to see that 
$\mathcal{A}^{[1]}_{m,n}(U) \cap \mathcal{A}^{[1]}_{j,k}(U) = \{0\}$ whenever $(m,n) \ne (j,k)$, so that 	$$\bigoplus_{m=0}^{r} \mathcal{A}^{[1]}_{m,\ell}(U) \quad \mbox{ and } \quad \bigoplus_{n=0}^{s} \mathcal{A}^{[1]}_{\ell,n}(U) $$ are direct sums, and so is 
$$  \bigoplus_{s_\ell=0}^{\ell-1} \left( \mathcal{A}^{[1]}_{s_\ell,\ell}(U)  \oplus   \mathcal{A}^{[1]}_{\ell,s_\ell}(U) \right) \oplus  \mathcal{A}^{[1]}_{\ell,\ell}(U).$$
The later one can be rewritten as 
$$  \left( \bigoplus_{m=0}^{\ell-1} \mathcal{A}^{[1]}_{m,\ell}(U)\right)  \oplus \left( \bigoplus_{n=0}^{\ell-1} \mathcal{A}^{[1]}_{\ell,n}(U) \right) \oplus  \mathcal{A}^{[1]}_{\ell,\ell}(U) = \bigoplus_{m,n=0\atop \max(m,n)=\ell}^\ell \mathcal{A}^{[1]}_{m,n}(U).$$
Moreover, we can prove the following.

\begin{prop}\label{propExpSer}
	For fixed positive integer $\ell$, the space $A_\ell^Z[Z^*|U]$ is the direct sum of $\mathcal{A}^{[1]}_{\ell,s_\ell}(U)$ and $\mathcal{A}^{[1]}_{\ell,s_\ell}(U)$ for
	 $s_\ell = 0,1, \cdots ,\ell$, 
	\begin{align}		\label{char1} 
		A_\ell^Z[Z^*|U] = \bigoplus_{m,n=0\atop \max(m,n)=\ell}^\ell \mathcal{A}^{[1]}_{m,n}(U).
	\end{align}
\end{prop}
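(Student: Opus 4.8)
The plan is to prove the set equality in \eqref{char1} by establishing the two inclusions separately, the directness of the sum having already been recorded through the trivial pairwise intersections $\mathcal{A}^{[1]}_{m,n}(U)\cap \mathcal{A}^{[1]}_{j,k}(U)=\{0\}$. The entire argument rests on decoupling every quantity into its $\eu$ and $\es$ slots and invoking the characterization of bc-holomorphy in Theorem \ref{thmCharbcHolm}.

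First I would prove $A_\ell^Z[Z^*|U]\subseteq \bigoplus \mathcal{A}^{[1]}_{m,n}(U)$. Starting from $f=\sum_{k=0}^{\ell-1}(Z^*)^k g_k$ with $g_k\in\mathcal{BH}ol(U)$ and $g_{\ell-1}\not\equiv 0$, I would use $Z^*=\overline{\alpha}\eu+\overline{\beta}\es$ together with $\eu^2=\eu$, $\es^2=\es$, $\eu\es=0$ to get, by induction, $(Z^*)^k=\overline{\alpha}^k\eu+\overline{\beta}^k\es$. Applying Theorem \ref{thmCharbcHolm} to each coefficient gives $g_k=\phi_k^+(\alpha)\eu+\phi_k^-(\beta)\es$ with $\phi_k^\pm$ holomorphic, and idempotent-wise multiplication yields
$$f=\Big(\sum_{k=0}^{\ell-1}\overline{\alpha}^k\phi_k^+(\alpha)\Big)\eu+\Big(\sum_{k=0}^{\ell-1}\overline{\beta}^k\phi_k^-(\beta)\Big)\es.$$
The two components are polynomials in $\overline{\alpha}$, respectively $\overline{\beta}$, of degree at most $\ell-1$ with holomorphic coefficients, hence lie in $A_m^\alpha[\overline{\alpha}|U^+_\C]$ and $A_n^\beta[\overline{\beta}|U^-_\C]$, where $m-1$ and $n-1$ are the top nonvanishing degrees. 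The exact-order hypothesis $g_{\ell-1}\not\equiv 0$ means $\phi_{\ell-1}^+\not\equiv 0$ or $\phi_{\ell-1}^-\not\equiv 0$, that is $\max(m,n)=\ell$, so $f\in\mathcal{A}^{[1]}_{m,n}(U)$ with $\max(m,n)=\ell$.

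For the reverse inclusion I would take $f=\varphi\eu+\psi\es\in\mathcal{A}^{[1]}_{m,n}(U)$ with $\max(m,n)=\ell$, write $\varphi=\sum_{k=0}^{m-1}a_k(\alpha)\overline{\alpha}^k$ and $\psi=\sum_{k=0}^{n-1}b_k(\beta)\overline{\beta}^k$, pad the shorter coefficient list with zeros up to index $\ell-1$, and set $g_k(Z):=a_k(\alpha)\eu+b_k(\beta)\es$. Each $g_k$ is bc-holomorphic by Theorem \ref{thmCharbcHolm}, and the same idempotent multiplication as above recovers $\sum_{k=0}^{\ell-1}(Z^*)^k g_k=f$; since $\max(m,n)=\ell$, at least one of $a_{\ell-1},b_{\ell-1}$ is nonzero, whence $g_{\ell-1}\not\equiv 0$ and $f\in A_\ell^Z[Z^*|U]$. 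The uniqueness of the idempotent decomposition $f=\varphi\eu+\psi\es$, together with the already-noted trivial intersections, then makes the membership of $f$ in a single $\mathcal{A}^{[1]}_{m,n}(U)$ unique, giving the direct sum.

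The main obstacle is purely bookkeeping: matching the single exact-order condition $g_{\ell-1}\not\equiv 0$ on the left with the condition $\max(m,n)=\ell$ on the right, and correctly handling the case $m\neq n$, where the idempotent component of strictly lower degree forces its coefficient list to be padded by zeros. Some care is also needed in reading the direct-sum symbol, since the summands are exact-order sets rather than linear subspaces; it is precisely the decoupling into independent $\eu$ and $\es$ slots that legitimizes the notation.
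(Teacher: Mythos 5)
Your proof is correct and takes essentially the same route as the paper's: both arguments reduce everything to the idempotent slots via $(Z^*)^k=\overline{\alpha}^k\eu+\overline{\beta}^k\es$, apply Theorem \ref{thmCharbcHolm} to the coefficients, pad the shorter coefficient list with zeros, and match the single condition $g_{\ell-1}\not\equiv 0$ with $\max(m,n)=\ell$. If anything, your identification of $m,n$ as the top nonvanishing degrees is stated more carefully than in the paper, whose converse direction defines $m_\ell,n_\ell$ with a $\min$ where a $\max$ is clearly intended.
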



\begin{proof}
From Definition \ref{defbcpolhol1kind}
	each $\mathcal{A}^{[1]}_{m,n}(U)$ is clearly contained in $A_\ell^Z[Z^*|U]$. In fact, for every $f \in \mathcal{A}^{[1]}_{m,n}(U)$ there exist some holomorphic functions $\varphi_k \in \mathcal{H}ol(U^+_\C)$ and $\psi_k \in \mathcal{H}ol(U^-_\C) $ such that $\varphi_{m-1} \not\equiv 0$, $\psi_{n-1} \not\equiv0$, and
\begin{align*}
	f(Z)& = \left( \sum_{k=0}^{m-1} \overline{\alpha}^k \varphi_k(\alpha)\right)  \eu +
	\left( \sum_{k=0}^{n-1} \overline{\beta}^k \psi_k(\beta) \right) \es \\
	 &= \sum_{k=0}^{\max(m,n)-1} \left( \overline{\alpha}^k a_k^m(\alpha)  \eu +
\overline{\beta}^k b_k^n(\beta)  \es\right) \\& =
\sum_{k=0}^{\max(m,n)-1} {Z^*}^{k}g_k(Z) ,
\end{align*}
	where we have set  
$$  \widetilde{\varphi}_k^m (\alpha)= \left\{ \begin{array}{ll}
	\displaystyle  \varphi_k (\alpha) , & k=0,1, \cdots, m-1 \\
	0  , & k=m, m+1, \cdots 
\end{array} \right. $$ and 
$$  \widetilde{\psi}_k^n (\beta)= \left\{ \begin{array}{ll}
	\displaystyle  \psi_k (\beta) , & k=0,1, \cdots, n-1 \\
	0  , & k=n, n+1, \cdots .
\end{array} \right. $$
 The involved functions $ g_k$ are given by  $ g_k(Z):=  \widetilde{\varphi}_k^m(\alpha)  \eu +   \widetilde{\psi}_k^n(\beta)  \es$ and belongs to $  \mathcal{BH}ol(U)$ thanks to Theorem \ref{thmCharbcHolm}. Moreover,
 $$ g_{\max(m,n)-1} (\alpha)= \left\{ \begin{array}{lll}
 	\displaystyle  \varphi_{m-1} (\alpha) \eu , & m > n , \\
 		\displaystyle  \varphi_{m-1} (\alpha) \eu + \displaystyle  \psi_{m-1} (\beta) \es, & m = n , \\
 			\displaystyle  \psi_{n-1} (\beta) \es , & m < n ,
 \end{array} \right. \not\equiv 0. $$ 
This proves that $f \in  A_\ell^Z[Z^*|U]$ for fixed $\ell$ with $\ell = \max(m,n)$, and therefore
$$ \bigoplus_{m,n=0\atop \max(m,n)=\ell}^\ell \mathcal{A}^{[1]}_{m,n}(U) \subset  	A_\ell^Z[Z^*|U] .$$

The converse is implicitly contained in the above lines. Indeed, if
$$f(Z)  =
\sum_{k=0}^{\ell-1} {Z^*}^{k}g_k(Z) \in   A_\ell^Z[Z^*|U]$$
for certain  $ g_k(Z):= a_k(\alpha)  \eu +  b_k(\beta)  \es $  bc-holomorphic functions in $U$, with $a_{k}\in \mathcal{H}ol(U^+_\C) $  and $b_{k}\in \mathcal{H}ol(U^-_\C) $, then from $g_{\ell-1} \not\equiv 0$ we have $a_{\ell-1} \not\equiv  0$ or $b_{\ell-1} \not\equiv  0$. Thus, let $m_\ell = \min\{k = 1, \cdots, \ell; \,  a_{k-1} \not\equiv 0\}$ and  $n_\ell = \min\{k = 1, \cdots, \ell; \,  b_{k-1} \not\equiv 0\}$. Then, $\max(m_\ell,n_\ell) =\ell$ and 
%
	\begin{align*} f(z) &=  \left( \sum_{k=0}^{m_\ell-1} \overline{\alpha}^{k} a_{k}(\alpha) \right) \eu + 
	\left( \sum_{k=0}^{n_\ell-1}\overline{\beta}^{k}b_{k}(\beta)\right) \es , 
\end{align*} 
which clearly belongs to $\mathcal{A}^{[1]}_{m_\ell,n_\ell}(U)$. 
This completes the proof. 
\end{proof}

\begin{rem} The case of $\ell=1$ leads to 
	$$ 	A_1^Z[Z^*|U] =   \mathcal{A}^{[1]}_{0,1}(U)  \oplus  \mathcal{A}^{[1]}_{1,0}(U)   \oplus  \mathcal{A}^{[1]}_{1,1}(U) .$$
	\end{rem}
\begin{rem}
	Let $f$ be a $(m,n)$-bc-holomorphic function in a certain  region $U\subset \BC$. Then, it can be expanded as 
	\begin{align*} f(z) &=  \sum_{k=0}^{\max(m,n)-1} {Z^*}^k g_k(Z)
	\end{align*} 
	for given  bc-holomorphic functions  $g_k$ in $U$.
\end{rem}

The  interpretation in terms of the first order differential operators  \eqref{Wirtingerstar}, \eqref{Wirtingerdagger} and \eqref{Wirtingerbar} is provided by the following assertion.

\begin{prop}\label{propmax1kind}
	We have
	\begin{align}\label{char2} 
		A_\ell^Z[Z^*|U] = \ker\left( \frac{\partial^{\ell }}{\partial {Z^*}^{\ell}}\right) \cap \ker\left(  \frac{\partial }{\partial  Z^\dagger}\right)  \cap \ker \left( \frac{\partial }{\partial \widetilde{Z}} \right).
	\end{align}
\end{prop}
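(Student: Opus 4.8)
The plan is to pass to the idempotent coordinates $\alpha,\beta$ and reduce the bicomplex statement to the classical one–variable polyanalytic structure theorem recalled right after the definition of $A_n^z[\overline{z}|\Omega]$. Writing $f=f^+\eu+f^-\es$ with $f^\pm:\BC\longrightarrow\C$, Remark \ref{remact} together with the idempotent forms \eqref{Wirtingerstar}, \eqref{Wirtingerdagger} and \eqref{Wirtingerbar} gives $\partial_{Z^\dagger}f=\partial_\beta f^+\,\eu+\partial_\alpha f^-\,\es$, $\partial_{\widetilde Z}f=\partial_{\overline\beta}f^+\,\eu+\partial_{\overline\alpha}f^-\,\es$ and $\partial_{Z^*}^{\ell}f=\partial_{\overline\alpha}^{\ell}f^+\,\eu+\partial_{\overline\beta}^{\ell}f^-\,\es$. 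Hence the three kernel conditions defining the right-hand side decouple completely into separate conditions on the two complex components $f^+$ and $f^-$, which is what makes the whole argument elementary.

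For the inclusion from right to left, suppose $f$ lies in the intersection of the three kernels. From $\partial_{Z^\dagger}f=\partial_{\widetilde Z}f=0$ I read off $\partial_\beta f^+=\partial_{\overline\beta}f^+=0$ and $\partial_\alpha f^-=\partial_{\overline\alpha}f^-=0$, so that $f^+$ is a function of $(\alpha,\overline\alpha)$ alone on $U^+_\C$ and $f^-$ a function of $(\beta,\overline\beta)$ alone on $U^-_\C$. The remaining condition $\partial_{Z^*}^{\ell}f=0$ then reads $\partial_{\overline\alpha}^{\ell}f^+=0$ and $\partial_{\overline\beta}^{\ell}f^-=0$, i.e. $f^+$ and $f^-$ are polyanalytic of order at most $\ell$ in the single variables $\alpha$ and $\beta$. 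By the classical structure theorem for polyanalytic functions there exist $\varphi_k\in\mathcal{H}ol(U^+_\C)$ and $\psi_k\in\mathcal{H}ol(U^-_\C)$ with $f^+=\sum_{k=0}^{\ell-1}\overline\alpha^{k}\varphi_k(\alpha)$ and $f^-=\sum_{k=0}^{\ell-1}\overline\beta^{k}\psi_k(\beta)$. Setting $g_k:=\varphi_k(\alpha)\eu+\psi_k(\beta)\es$, which is bc-holomorphic by Theorem \ref{thmCharbcHolm}, and using $(Z^*)^{k}=\overline\alpha^{k}\eu+\overline\beta^{k}\es$ together with the idempotent product rule, I recover
\[
f=\sum_{k=0}^{\ell-1}(Z^*)^{k}g_k\in A_\ell^Z[Z^*|U].
\]

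The converse inclusion is a direct computation via the Leibniz rule, legitimate since each operator is a first order $\BC$-linear derivation. On $Z^*=\overline\alpha\eu+\overline\beta\es$ the idempotent expressions give $\partial_{Z^\dagger}Z^*=\partial_{\widetilde Z}Z^*=0$ and $\partial_{Z^*}Z^*=1$, while every bc-holomorphic $g_k$ satisfies $\partial_{Z^\dagger}g_k=\partial_{\widetilde Z}g_k=\partial_{Z^*}g_k=0$. Thus $\partial_{Z^\dagger}$ and $\partial_{\widetilde Z}$ annihilate each summand $(Z^*)^{k}g_k$, whereas $\partial_{Z^*}$ lowers the $Z^*$-degree by one, so $\partial_{Z^*}^{\ell}$ annihilates any $Z^*$-polynomial of degree at most $\ell-1$. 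This yields $A_\ell^Z[Z^*|U]\subseteq\ker(\partial_{Z^*}^{\ell})\cap\ker(\partial_{Z^\dagger})\cap\ker(\partial_{\widetilde Z})$.

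I expect the only genuinely delicate point to be the degree bookkeeping, not the analysis: the right-hand kernel is a $\BC$-module containing functions of every $Z^*$-order up to $\ell$, so the asserted equality must be read with $A_\ell^Z[Z^*|U]$ understood as the $Z^*$-polynomials of $Z^*$-degree at most $\ell-1$ with bc-holomorphic coefficients. The reconstruction step must therefore match the \emph{holomorphy} of the coefficients $\varphi_k,\psi_k$ delivered by the classical theorem with the \emph{bc-holomorphy} of the $g_k$ required by the definition of $A_\ell^Z[Z^*|U]$, and this is exactly where Theorem \ref{thmCharbcHolm} is essential.
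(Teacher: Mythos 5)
Your proof is correct and follows essentially the same route as the paper: both reduce to the idempotent components $f^\pm$, apply the classical one-variable structure theorem for polyanalytic functions, and reassemble bc-holomorphic coefficients via Theorem \ref{thmCharbcHolm} (the paper merely packages this reassembly as Proposition \ref{propExpSer} and then cites it). Your closing remark on degree bookkeeping is well taken --- the paper's converse step silently drops the exactness requirement $g_{\ell-1}\not\equiv 0$ in the definition of $A_\ell^Z[Z^*|U]$, so the stated equality must indeed be read with coefficients of $Z^*$-degree at most $\ell-1$, exactly as you note.
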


\begin{proof}
	Thanks to Proposition \ref{propExpSer}, the assertion of Proposition \ref{propmax1kind} becomes equivalent to 
	\begin{align*} 
		\bigoplus_{m,n=0\atop \max(m,n)=\ell}^{\ell} \mathcal{A}^{[1]}_{m,n}(U) = \ker\left( \frac{\partial^{\ell }}{\partial {Z^*}^{\ell}}\right) \cap \ker\left(  \frac{\partial }{\partial  Z^\dagger}\right)  \cap \ker \left( \frac{\partial }{\partial \widetilde{Z}} \right).
	\end{align*}
	For $f=   f^+\eu+f^-\es\in \mathcal{A}^{[1]}_{m,n}(U)$, the component functions $f^\pm: U^\pm_\C \longrightarrow \C$ are polyanalytic of order $m$ and $n$, respectively. Therefore, they satisfy
	$$	\left\{ 
	\begin{array}{lll}
		\partial^{m}_{\overline{\alpha}}f^{+} & = 0   =  	\partial^{n}_{\overline{\beta}}f^{-} , \\
		\partial_{\overline{\beta}}f^{+} & = 0 =  	\partial_{\overline{\alpha}}f^{-},\\
		\partial_{\beta}f^{+} & = 0 =  \partial_{\beta}f^{-} .
	\end{array}
	\right.
	$$
	This can be rewritten equivalently as  
	\begin{align}\label{prop1kind} \frac{\partial^{\max(m,n)}f}{\partial {Z^*}^{\max(m,n)}}= \frac{\partial f}{\partial  Z^\dagger} = \frac{\partial f}{\partial \widetilde{Z}} =0 
	\end{align}
	and shows that 
	$$\mathcal{A}^{[1]}_{m,n}(U) \subset \ker\left( \frac{\partial^{\max(m,n)}}{\partial {Z^*}^{\max(m,n)}}\right) \cap \ker\left(  \frac{\partial }{\partial  Z^\dagger}\right)  \cap \ker \left( \frac{\partial }{\partial \widetilde{Z}} \right) .$$ 
	Now, if $g$ satisfies 
	\begin{align}\label{prop1kindinv}
		\frac{\partial^{\ell}g}{\partial {Z^*}^{\ell}}= \frac{\partial g}{\partial  Z^\dagger} = \frac{\partial g}{\partial \widetilde{Z}} =0,
	\end{align}
	we obtain $	\partial^{\ell}_{\overline{\alpha}}f^{+}  =  	\partial^{\ell}_{\overline{\beta}}f^{-} =0$. Moreover, $f^{+}$ (resp. $f^{-}$) is clearly independent of $\beta$ and $\overline{\beta}$ (resp. $\alpha$ and $\overline{\alpha}$). Then, 
	there exists a pair of nonnegative integers $(m,n)$ such that $\ell=\max(m,n)$, and $\partial^{m}_{\overline{\alpha}}f^{+} =0  =  	\partial^{n}_{\overline{\beta}}f^{-} $. Therefore, $g$ is a $(m,n)$-bc-polyholomorphic in $U$. 
\end{proof}

A more general definition can be proposed from  \eqref{Wirtingerstar}, \eqref{Wirtingerdagger} and \eqref{Wirtingerbar}. 

\begin{defn}[\cite{ElGourariGhanmiZine2020}]\label{defmnkbcpolyh}
	A bicomplex-valued function $f: U\rightarrow \BC$   is said to be $(m,n,k)$-bc-polyholomorphic if
	$$\partial ^{m}_{Z^{\ast}}f=\partial ^{n}_{\widetilde{Z}}f    =\partial ^{k}_{Z^{\dagger} } f=0 . $$
	We denote by $\mathcal{A}^{[2]}_{m,n,k}(U)$ the corresponding space.
\end{defn}

The class of $(m,n)$-bc-polyholomorphic of first kind appears then as a subclass of the second one in Definition \ref{defmnkbcpolyh}, so that Proposition \ref{propmax1kind} can be reworded as 
$$ 
\mathcal{A}^{[2]}_{\ell,1,1}(U) 
= \bigoplus_{j,k=0 \atop
	\max(j,k)=\ell}^{\ell} \mathcal{A}^{[1]}_{j,k}(U) .$$

The following establishes a characterization of $(m,n,k)$-bc-polyholomorphic functions. 

\begin{prop}\label{propChar}
	A bicomplex-valued function $f$ in $\BC$ is $(m,n,k)$-bc-polyholomorphic if and only if it can be expanded as 
	\begin{align}\label{expansion}
		f(Z)=\sum\limits_{\ell_{1}=0}^{m-1}
		\sum\limits_{ \ell_{2}=0}^{n-1}
		\sum\limits_{\ell_{3}=0}^{k-1} {Z^{\ast}}^{\ell_{1}}\widetilde{Z}^{\ell_{2}}{Z^{\dagger} }^{\ell_{3}}H_{\ell_{1},\ell_{2},\ell_{3}}(Z) 
	\end{align}
	for given bc-holomorphic functions $H_{\ell_{1},\ell_{2},\ell_{3}}$.	
\end{prop}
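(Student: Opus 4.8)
The plan is to reduce everything, via the idempotent decomposition, to the classical one-variable polyanalytic structure theorem (recalled in Section 3 for $A_n^z[\overline{z}|\Omega]$) applied successively in three complex directions. First I would write $f = f^+\eu + f^-\es$ and translate the three defining equations of Definition \ref{defmnkbcpolyh} into scalar conditions on the components. Using the idempotent expressions \eqref{Wirtingerstar}, \eqref{Wirtingerdagger}, \eqref{Wirtingerbar} together with the action rule of Remark \ref{remact}, the operators act diagonally, namely $\partial_{Z^*} f = \partial_{\overline{\alpha}} f^+ \eu + \partial_{\overline{\beta}} f^- \es$, $\partial_{\widetilde{Z}} f = \partial_{\overline{\beta}} f^+ \eu + \partial_{\overline{\alpha}} f^- \es$ and $\partial_{Z^\dagger} f = \partial_{\beta} f^+ \eu + \partial_{\alpha} f^- \es$. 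Hence $\partial^m_{Z^*} f = \partial^n_{\widetilde{Z}} f = \partial^k_{Z^\dagger} f = 0$ is equivalent to the six equations $\partial^m_{\overline{\alpha}} f^+ = \partial^n_{\overline{\beta}} f^+ = \partial^k_{\beta} f^+ = 0$ together with $\partial^m_{\overline{\beta}} f^- = \partial^n_{\overline{\alpha}} f^- = \partial^k_{\alpha} f^- = 0$.

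Next, for the component $f^+$ I would invoke the polyanalytic structure theorem three times. Since the three operators $\partial_{\overline{\alpha}}, \partial_{\overline{\beta}}, \partial_{\beta}$ commute, applying the theorem first in $\overline{\alpha}$ (treating the remaining variables as smooth parameters), then in $\overline{\beta}$, then in $\beta$, and using the uniqueness of each polynomial expansion to propagate the two remaining equations down to the coefficients, yields $f^+(Z) = \sum_{\ell_1=0}^{m-1}\sum_{\ell_2=0}^{n-1}\sum_{\ell_3=0}^{k-1} \overline{\alpha}^{\ell_1}\overline{\beta}^{\ell_2}\beta^{\ell_3} a_{\ell_1,\ell_2,\ell_3}(\alpha)$, where each $a_{\ell_1,\ell_2,\ell_3}$ is holomorphic in $\alpha$; indeed the first decomposition forces $\partial_{\overline{\alpha}} a_{\ell_1,\ell_2,\ell_3} = 0$, and the subsequent two do not reintroduce any $\overline{\alpha}$-dependence. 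The symmetric argument gives $f^-(Z) = \sum_{\ell_1,\ell_2,\ell_3} \overline{\beta}^{\ell_1}\overline{\alpha}^{\ell_2}\alpha^{\ell_3} b_{\ell_1,\ell_2,\ell_3}(\beta)$ with $b_{\ell_1,\ell_2,\ell_3}$ holomorphic in $\beta$.

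Then I would reassemble. Computing ${Z^*}^{\ell_1} = \overline{\alpha}^{\ell_1}\eu + \overline{\beta}^{\ell_1}\es$, $\widetilde{Z}^{\ell_2} = \overline{\beta}^{\ell_2}\eu + \overline{\alpha}^{\ell_2}\es$, ${Z^\dagger}^{\ell_3} = \beta^{\ell_3}\eu + \alpha^{\ell_3}\es$ and using $\eu\es = 0$ gives the clean product ${Z^*}^{\ell_1}\widetilde{Z}^{\ell_2}{Z^\dagger}^{\ell_3} = \overline{\alpha}^{\ell_1}\overline{\beta}^{\ell_2}\beta^{\ell_3}\eu + \overline{\beta}^{\ell_1}\overline{\alpha}^{\ell_2}\alpha^{\ell_3}\es$. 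Setting $H_{\ell_1,\ell_2,\ell_3}(Z) = a_{\ell_1,\ell_2,\ell_3}(\alpha)\eu + b_{\ell_1,\ell_2,\ell_3}(\beta)\es$, which is bc-holomorphic by Theorem \ref{thmCharbcHolm}, the idempotent components of $\sum {Z^*}^{\ell_1}\widetilde{Z}^{\ell_2}{Z^\dagger}^{\ell_3} H_{\ell_1,\ell_2,\ell_3}$ reproduce exactly $f^+$ and $f^-$, establishing \eqref{expansion}. For the converse I would simply apply the three operators to \eqref{expansion}: each acts diagonally, and term by term $\partial^m_{Z^*}$ annihilates the $\overline{\alpha}$- and $\overline{\beta}$-powers (of order $\le m-1$) and the bc-holomorphic factor $H_{\ell_1,\ell_2,\ell_3}$, while $\partial^n_{\widetilde{Z}}$ and $\partial^k_{Z^\dagger}$ do the same in their respective variables, so $f\in \mathcal{A}^{[2]}_{m,n,k}(\BC)$.

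The main obstacle is the iterated polyanalytic decomposition of $f^+$ and $f^-$: one must justify that the three commuting differential conditions resolve simultaneously into a single triple-indexed polynomial expansion, and, most delicately, that the surviving coefficients depend \emph{holomorphically} on $\alpha$ (resp.\ $\beta$) rather than merely smoothly. This holomorphy is precisely what the first application of the structure theorem (in $\overline{\alpha}$) provides, and the remaining two applications must be verified not to reintroduce any $\overline{\alpha}$-dependence; the commutativity of the operators is exactly what makes this propagation consistent.
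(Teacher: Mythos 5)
Your proposal is correct and takes essentially the same route as the paper's proof: idempotent decomposition of $f$ and of the three operators, reduction of $\partial^m_{Z^*}f=\partial^n_{\widetilde{Z}}f=\partial^k_{Z^\dagger}f=0$ to the two scalar systems $\partial^m_{\overline{\alpha}}f^+=\partial^n_{\overline{\beta}}f^+=\partial^k_{\beta}f^+=0$ and $\partial^m_{\overline{\beta}}f^-=\partial^n_{\overline{\alpha}}f^-=\partial^k_{\alpha}f^-=0$, triple polynomial expansion of each component, and reassembly into bc-holomorphic coefficients $H_{\ell_1,\ell_2,\ell_3}=a_{\ell_1,\ell_2,\ell_3}\eu+b_{\ell_1,\ell_2,\ell_3}\es$ via Theorem \ref{thmCharbcHolm}. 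The only difference is one of detail, not method: you spell out the iterated use of the one-variable polyanalytic structure theorem (with holomorphy of the coefficients propagated by uniqueness of the expansions) and the converse direction, both of which the paper compresses into the single word ``equivalently.''
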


\begin{proof}
	Write   $f(\alpha e^{+}+\beta e^{-})=f^{+}(\alpha e^{+}+\beta e^{-})e^{+}+f^{-}(\alpha e^{+}+\beta e^{-})e^{-}$  for some given $ f^{\pm}:\BC\rightarrow \C.$ Then, by means of the idempotent decomposition, the system $\partial ^{m}_{Z^{\ast}}f=\partial ^{n}_{\widetilde{Z}}f=\partial ^{k}_{Z^{\dagger} }f=0$ 
	reads equivalently as
	$ \partial^{m}_{\overline{\alpha}}f^{+}=
	\partial^{n}_{\overline{\beta}}f^{+} = 
	\partial^{k}_{\beta}f^{+} =0$ and 
	$\partial^{m}_{\overline{\beta}}f^{-} =
	\partial^{n}_{\overline{\alpha}}f^{-} =
	\partial^{k}_{\alpha}f^{+} =0$.
	Subsequently,	$f^{+}$ and $f^{-}$ can be expanded as
	$$ f^{+}(\alpha e^{+}+\beta e^{-})=\sum\limits_{\ell_{1}=0}^{m-1}
	\sum\limits_{ \ell_{2}=0}^{n-1}
	\sum\limits_{\ell_{3}=0}^{k-1}
	\overline{\alpha}^{\ell_{1}}\overline{\beta}^{\ell_{2}}\beta^{\ell_{3}}h_{\ell_{1}\ell_{2}\ell_{3}}(\alpha)$$ and
	$$ f^{-}(\alpha e^{+}+\beta e^{-})=\sum\limits_{\ell_{1}=0}^{m-1}
	\sum\limits_{ \ell_{2}=0}^{n-1}
	\sum\limits_{\ell_{3}=0}^{k-1}\overline{\beta}^{\ell_{1}}\overline{\alpha}^{\ell_{2}}\alpha^{\ell_{3}}g_{\ell_{1}\ell_{2}\ell_{3}}(\beta)$$
	for the given holomorphic functions $h_{\ell_{1},\ell_{2},\ell_{3}}, g_{h_{1},h_{2},h_{3}} :\C\longrightarrow \C$.
	This proves \eqref{expansion} by setting $H_{\ell_{1},\ell_{2},\ell_{3}}(Z) = h_{\ell_{1},\ell_{2},\ell_{3}}\eu +  g_{h_{1},h_{2},h_{3}}\es$, which is clearly bc-holomorphic thanks to Theorem \ref{thmCharbcHolm}. 
\end{proof}

An alternative proof of Proposition \ref{propChar} using its complex version is the following. 
Starting from given
$$  f(\alpha e^{+}+\beta e^{-})=f^{+}(\alpha e^{+}+\beta e^{-})e^{+}+f^{-}(\alpha e^{+}+\beta e^{-})e^{-}$$
in $\mathcal{A}^{[2]}_{m,n,k}(T)$, we can claim that the partial functions
\begin{eqnarray}
	F^{+}_{\beta_{0}}: \C \rightarrow \C ; \, 
	\alpha \longmapsto  F^{+}_{\beta_{0}}(\alpha)=f^{+}(\alpha e^{+}+\beta_{0}e^{-})
\end{eqnarray}
and
\begin{eqnarray}
	F^{-}_{\alpha_{0}}: \C \rightarrow \C ; \, 
	\beta \longmapsto  F^{-}_{\beta_{0}}(\beta)=f^{-}(\alpha_{0} e^{+}+\beta_{0}e^{-})
\end{eqnarray}	
are polyanalytic with order $m$ for every fixed complex numbers $\beta_{0}$ and $\alpha_{0}$, respectively. In fact, we have
$ \partial^{m}_{\overline{\alpha}}F^{+}_{\beta_{0}}=\partial^{m}_{\overline{\alpha}}f^{+}(\alpha e^{+}+\beta_{0}e^{-})=0$, $ \partial^{m-1}_{\overline{\alpha}}F^{+}_{\beta_{0}} \ne 0$, 
$\partial^{m}_{\overline{\beta}}F^{-}_{\alpha_{0}}=\partial^{m}_{\overline{\beta}}f^{-}(\alpha_{0} e^{+}+\beta e^{-})= 0$ and $ \partial^{n-1}_{\overline{\beta}}F^{+}_{\alpha_{0}} \ne 0$, since $F^{+}_{\beta_{0}} \in A_m^\alpha[\overline{\alpha}]$ and $F^{-}_{\alpha_{0}}\in A_m^\beta[\overline{\beta}]$.
Therefore, from \cite{Balk1991}, we know that the real parts $\Re(F^{+}_{\beta_{0}})$ and $\Re(F^{-}_{\alpha_{0}})$ are $m$-polyharmonic with respect to the Laplacian $\Delta_{\alpha}$ and $\Delta_{\beta}$, respectively,
$$\Delta_{\alpha}^{m}\left(\Re (F^{+}_{\beta_{0}})\right)=0=\Delta_{\beta}^{m}\left(\Re (F^{-}_{\alpha_{0}})\right)$$ for every fixed $\alpha_{0}$, $\beta_{0}$.
Subsequently,
$$\left(\Delta_{\alpha}\Delta_{\beta}\right)^{m}(\Re (f)) 
=
\Re\left(\Delta_{\beta}^{m}\left(\Delta_{\alpha}^{m}F_{\beta}^{+}\right)+
\Delta_{\alpha}^{m}\left(\Delta_{\beta}^{m}F_{\alpha}^{-}\right)\right)=0.$$
This shows that $\Re_{c}(f):T\rightarrow \R$ is $\Delta_{\alpha}\Delta_{\beta}$- polyharmonic of order $m$. In a similar way, we obtain
$$\Delta_{bc}^{m}\left(\Re_{hyp} (f)\right)=\Delta_{\alpha}^{m}\left(\Re (F_{\beta}^{+})\right)e^{+}+\Delta_{\beta}^{m}\left(\Re (F_{\alpha}^{+})\right)e^{-}=0.$$
This complete the proof of Proposition \ref{propChar}.

\section{Bicomplex polyharmonic functions}

A complex-valued function $u(x,y)$ is said to be polyharmonic of order $n$ in some region $\Omega$ of the plane $\R^{2}$, if it belongs to $\mathcal{C}^{2n}(\Omega)$ and satisfies the Laplace equation of order (the smallest nonnegative) $n$, $ \Delta^{n}u(x,y)=0$, with $\Delta$ is the Laplace operator in \eqref{lapz}. Its general solution plays an important rule in the torsion-free axisymmetric deformation problems in elasticity  theory.
The close connection to polyanalytic remains valid as expected in the assertion below {\cite[p.19]{Balk1991}}.

\begin{thm} \label{polyHH}
	A complex-valued function $f=\phi + i\psi$ on $\C$; $\phi,\psi: \C \longrightarrow \R$, belongs to $A_n^z[\overline{z}]$ if and only if its real part $\Re(f):=\phi$ is a polyharmonic function of the same order. 
\end{thm}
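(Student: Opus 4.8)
My plan rests on the factorization $\Delta^{n}=\partial_{z}^{n}\,\partial_{\bz}^{n}$, which follows at once from $\Delta=\partial_{z}\partial_{\bz}$ in \eqref{lapz} together with the commutativity of the Wirtinger operators. Granting this, I would prove the direct implication by a one-line operator computation, and the converse by induction on $n$, arranging the base case $n=1$ to be precisely the classical fact that a real-valued harmonic function on $\C$ is the real part of a holomorphic function (the very statement this theory generalizes).

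For the direct implication, write $f=\sum_{k=0}^{n-1}a_{k}(z)\bz^{k}$ with $a_k$ holomorphic, so that $\partial_{\bz}^{n}f=0$. Taking complex conjugates turns $f$ into a polynomial of degree $\le n-1$ in $z$ with antiholomorphic coefficients, whence $\partial_{z}^{n}\overline{f}=0$. Since $\phi=\Re f=\tfrac12(f+\overline f)$ and mixed Wirtinger derivatives commute, the factorization yields
\[
\Delta^{n}\phi=\tfrac12\,\partial_{z}^{n}\partial_{\bz}^{n}f+\tfrac12\,\partial_{z}^{n}\partial_{\bz}^{n}\overline f=\tfrac12\,\partial_{z}^{n}\big(\partial_{\bz}^{n}f\big)+\tfrac12\,\partial_{\bz}^{n}\big(\partial_{z}^{n}\overline f\big)=0,
\]
so $\phi$ is polyharmonic of order $\le n$.

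For the converse I would argue inductively. The case $n=1$ is the classical harmonic-conjugate theorem. Assume the claim for $n-1$ and let $\Delta^{n}\phi=0$ with $\phi$ real. Then $\chi:=\Delta\phi$ is real and satisfies $\Delta^{n-1}\chi=0$, so by the inductive hypothesis $\chi=\Re g$ for some $g=\sum_{k=0}^{n-2}c_{k}(z)\bz^{k}$ with $c_k$ holomorphic. Choosing holomorphic antiderivatives $C_k$ of $c_k$ on $\C$ and setting $f_{0}=\sum_{k=0}^{n-2}\tfrac{1}{k+1}\,C_{k}(z)\,\bz^{k+1}$, the computation $\Delta(\bz^{k+1}C_k)=(k+1)\bz^{k}C_k'=(k+1)\bz^k c_k$ shows $\Delta\,\Re f_{0}=\Re g=\Delta\phi$. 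Hence $\phi-\Re f_{0}$ is harmonic, so equals $\Re h$ for a holomorphic $h$, and $\phi=\Re(f_{0}+h)$ with $f_{0}+h$ polyanalytic of order $\le n$.

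The delicate point — and the step I expect to be the real obstacle — is the matching of \emph{exact} orders asserted by the ``same order'' clause. In the converse this can be secured by tracking the leading coefficient: the top term carries $\bz^{n-1}C_{n-2}$, and one checks (e.g.\ via the uniqueness in an Almansi-type decomposition) that exact polyharmonic order $n$ forces $c_{n-2}\not\equiv0$, hence exact polyanalytic order $n$. In the direct direction, however, a computation gives $\Delta^{n-1}\phi=(n-1)!\,\Re\big(a_{n-1}^{(n-1)}\big)$, which may vanish identically when the leading coefficient $a_{n-1}$ is a polynomial of degree $<n-1$; thus the exact order of $\phi$ can genuinely drop, and the fully reversible correspondence is really the one between orders ``at most $n$''. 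I would therefore either read ``order'' in the ``$\le n$'' sense throughout, or impose the genericity condition $a_{n-1}^{(n-1)}\not\equiv0$ to recover the literal exact-order statement.
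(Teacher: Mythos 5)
Your proof cannot be measured against an internal argument, because the paper does not actually prove Theorem \ref{polyHH}: it is imported verbatim from Balk's monograph (\cite[p.19]{Balk1991}) and then used as a black box in the proofs of Theorems \ref{thmCharmn} and \ref{Mainthmp}. Your argument supplies the missing proof by the standard route, and it is sound: the factorization $\Delta^{n}=\partial_{z}^{n}\partial_{\bz}^{n}$ disposes of the direct implication, and the induction via antiderivatives $C_k$ of the coefficients (with the harmonic-conjugate theorem on the simply connected domain $\C$ as base case) settles the converse; both computations check out. More valuable still is your closing caveat, which is not a pedantic worry but a genuine defect of the statement as this paper formulates it: since the paper's $A_n^z[\bz]$ demands $a_{n-1}\not\equiv 0$ (exact polyanalytic order) and its polyharmonic order is the \emph{smallest} exponent annihilating the function, the forward implication with exact orders is literally false --- $f(z)=\bz$ belongs to $A_2^z[\bz]$ while $\Re f=x$ is harmonic, of exact order $1$, precisely as your identity $\Delta^{n-1}\phi=(n-1)!\,\Re\bigl(a_{n-1}^{(n-1)}\bigr)$ predicts. (The converse does survive exact orders: if $\phi$ has exact polyharmonic order $n$, your construction yields $F$ of polyanalytic order at most $n$, and applying the forward ``$\le$'' implication to the exact order $m$ of $F$ forces $n\le m$, hence $m=n$.) So the correct reading is Balk's, where order means ``$\le n$'' throughout, or else one imposes your genericity condition $a_{n-1}^{(n-1)}\not\equiv 0$; your proposal to flag and repair this is an improvement on the paper, which silently inherits the same exact-order ambiguity in its bicomplex analogues (e.g.\ in Theorem \ref{thmCharmn} and Proposition \ref{proppolholharm}).
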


An interesting result in the theory of polyharmonic functions is  due to Almansi \cite{Almansi1899}. Below we state it for the complex plane $\C=\R^2$. However, the result remains valid in for star-shaped 
domain with respect to the origin. high dimension.

\begin{thm}\label{thmAlmansi}
	Every polyharmonic complex-valued function $F$ of order $m$  admits a unique decomposition of the form 
	$$ f(z) = \sum_{k=0}^{m-1} |z|^{2k} h_k(z) ,$$
	where $h_k$ are harmonic functions. 
\end{thm}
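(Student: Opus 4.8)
The plan is to argue by induction on the order $m$, reducing everything to a single structural identity describing how the Laplacian $\Delta=\partial_z\partial_{\overline{z}}$ acts on the building blocks $|z|^{2k}h$ with $h$ harmonic. The base case $m=1$ is immediate, since a harmonic $F$ already has the required form with $h_0=F$. For the inductive step I would set $G:=\Delta F$; because $\Delta^{m-1}G=\Delta^m F=0$, the function $G$ is polyharmonic of order $m-1$, so the induction hypothesis furnishes harmonic $g_0,\dots,g_{m-2}$ with $G=\sum_{k=0}^{m-2}|z|^{2k}g_k$. The task is then to \emph{integrate} this decomposition back up to one for $F$.

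First I would record the key computation: for harmonic $h$ (so $\partial_z\partial_{\overline{z}}h=0$) and $k\ge 1$,
\begin{equation*}
\Delta\!\left(|z|^{2k}h\right)=k\,|z|^{2(k-1)}(k+E)h, \qquad E:=z\partial_z+\overline{z}\,\partial_{\overline{z}},
\end{equation*}
which follows by applying $\partial_z\partial_{\overline{z}}$ to $z^k\overline{z}^k h$ and dropping the term carrying $\partial_z\partial_{\overline{z}}h=0$. The operator $E$ measures homogeneity: it acts as multiplication by $n$ on each homogeneous harmonic component (spanned by $z^n$ and $\overline{z}^n$). Consequently $c+E$ preserves harmonicity and, for every $c\ge 1$, is invertible on harmonic functions, its inverse acting as $(c+n)^{-1}$ on the degree-$n$ part. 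Equivalently, the commutator identity $[\Delta,E]=2\Delta$ gives $\Delta(c+E)=(E+c+2)\Delta$, so a solution of $(c+E)h=g$ with $g$ harmonic is automatically harmonic.

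Matching
\begin{equation*}
\Delta\Big(\sum_{k=1}^{m-1}|z|^{2k}h_k\Big)=\sum_{j=0}^{m-2}(j+1)\,|z|^{2j}\big((j+1)+E\big)h_{j+1}
\end{equation*}
against $G=\sum_{j=0}^{m-2}|z|^{2j}g_j$ term by term, I would \emph{define} the harmonic functions $h_{j+1}:=(j+1)^{-1}\big((j+1)+E\big)^{-1}g_j$ for $j=0,\dots,m-2$, which is legitimate since each constant $j+1\ge 1$. Then $\Delta\big(\sum_{k=1}^{m-1}|z|^{2k}h_k\big)=G=\Delta F$, so $h_0:=F-\sum_{k=1}^{m-1}|z|^{2k}h_k$ satisfies $\Delta h_0=0$, yielding the existence of $F=\sum_{k=0}^{m-1}|z|^{2k}h_k$.

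For uniqueness I would show that $\sum_{k=0}^{m-1}|z|^{2k}h_k=0$ with all $h_k$ harmonic forces every $h_k\equiv 0$. Iterating the key identity gives $\Delta^{p}\big(|z|^{2k}h\big)=\frac{k!}{(k-p)!}|z|^{2(k-p)}\prod_{i=k-p+1}^{k}(i+E)\,h$ for $p\le k$, and $\Delta^{p}\big(|z|^{2k}h\big)=0$ for $p>k$; hence applying $\Delta^{m-1}$ annihilates every term except $k=m-1$ and yields $(m-1)!\,\prod_{i=1}^{m-1}(i+E)\,h_{m-1}=0$. Since $\prod_{i=1}^{m-1}(i+E)$ acts as the nonzero scalar $\prod_{i=1}^{m-1}(i+n)$ on each degree-$n$ component, it is injective on harmonic functions, so $h_{m-1}\equiv 0$; repeating on the shorter sum peels off $h_{m-2},\dots,h_0$ in turn. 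The crux of the whole argument, and the step I expect to be the main obstacle, is precisely the invertibility of $c+E$ on harmonic functions for $c\ge 1$ --- that one can divide by this Euler-type operator without leaving the harmonic class --- on which both halves rest, and which is also the reason the statement holds verbatim only on domains star-shaped about the origin.
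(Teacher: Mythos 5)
The paper never proves this statement: Theorem \ref{thmAlmansi} is quoted as a classical result with a citation to Almansi (1899), so there is no internal proof to compare yours against, and a self-contained argument is genuinely added value. Judged on its own, your proof is correct and is essentially one of the two standard proofs of Almansi's theorem. The key identity $\Delta\left(|z|^{2k}h\right)=k\,|z|^{2(k-1)}(k+E)h$ for harmonic $h$ checks out (the cross terms assemble into $E$, the term with $\partial_z\partial_{\overline{z}}h$ drops), the existence step (invert $(j+1)+E$ to lift the decomposition of $\Delta F$, then absorb the error into a harmonic $h_0$) and the uniqueness step (apply $\Delta^{m-1}$, use injectivity of $\prod_{i}(i+E)$ on harmonic functions, and peel off terms) both go through. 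Two points deserve tightening. First, the invertibility of $c+E$ on harmonic functions is asserted via the homogeneous expansion; since the statement is on $\C$, you should say explicitly that an entire harmonic $g$ has a globally convergent expansion $g=\sum_n a_n z^n+\sum_n b_n\overline{z}^n$, that dividing coefficients by $c+n$ only improves convergence, and that the resulting sum is harmonic and solves $(c+E)h=g$; that closes the only real gap. Second, your final remark has the geometry slightly backwards: the expansion-based inverse you use is tied to disks or the whole plane centered at the origin, whereas the reason the theorem extends to star-shaped domains is that $c+E$ can instead be inverted there by the integral formula $h(z)=\int_0^1 t^{c-1}g(tz)\,dt$, which only needs the segment $[0,z]$ to lie in the domain; substituting that formula for your coefficientwise inverse makes your argument cover the star-shaped case the paper alludes to, with no other changes.
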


In the bicomplex context, we suggest the following.

\begin{defn}
A bicomplex-valued function $F$ is said to be bicomplex polyharmonic   (shortly, bc-polyharmonic) of (exact) order $n$ if $n$ the smallest positive integer such that 
$\Delta_{bc}^n f =0$. The case of $n=1$ corresponds to the conventional bc-harmonic functions.
\end{defn}
Subsequently,  $F= F^+ \eu + F^-\es$ is $n$-bc-polyharmonic with respect to $\Delta_{bc}$ if and only if the idempotent components $F^+$ and $F^-$ are polyharmonic of order $n_1$ and $n_2$ with respect to $\Delta_{\alpha}$ and $\Delta_{\beta}$, respectively, with $n=\max(n_1,n_2)$.
The next theorem provides a representation
of the so-called bicomplex polyharmonic functions in terms of bc-harmonic functions. This is in fact a natural extension of  Almansi's result (Theorem \ref{thmAlmansi}) to the bc-polyharmonic functions

\begin{prop}\label{thmpolharm5}
	Let $F :\BC \longrightarrow \D$  be a $\Delta_{bc}$-polyharmonic function of order $m$. Then, there exists bc-harmonic functions $ H_k$, $k=0,1,\cdots, m-1$, such that 
	\begin{align}\label{exppolyharm}
		F(Z)  = 
		\sum_{k=0}^{m-1}  |Z|_{bc}^{2k} H_k(Z) .
	\end{align}
\end{prop}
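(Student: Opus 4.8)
The plan is to reduce the bicomplex statement to the classical Almansi decomposition (Theorem~\ref{thmAlmansi}) componentwise via the idempotent representation, which is the natural strategy given the structure already exploited throughout the paper. Recall that $F = F^+\eu + F^-\es$ with $F^\pm : \C \longrightarrow \R$ (since $F$ is hyperbolic-valued, $F^+$ depends on $\alpha,\overline{\alpha}$ and $F^-$ on $\beta,\overline{\beta}$), and that, as noted immediately before the statement, $\Delta_{bc}^m F = 0$ is equivalent to $\Delta_\alpha^{n_1} F^+ = 0$ and $\Delta_\beta^{n_2} F^- = 0$ with $m = \max(n_1,n_2)$. This is exactly the hypothesis needed to apply the classical result to each idempotent component.

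\emph{First} I would apply Theorem~\ref{thmAlmansi} (classical Almansi) to $F^+$, viewed as a polyharmonic function of order $n_1$ on $\C$ in the variable $\alpha$, to obtain harmonic functions $h_k^+ : \C \longrightarrow \C$ with
$$ F^+(\alpha) = \sum_{k=0}^{n_1-1} |\alpha|^{2k} h_k^+(\alpha), \qquad \Delta_\alpha h_k^+ = 0. $$
\emph{Symmetrically}, I would decompose $F^-(\beta) = \sum_{k=0}^{n_2-1} |\beta|^{2k} h_k^-(\beta)$ with $\Delta_\beta h_k^- = 0$. To recombine these into a single bicomplex expression, I would set $H_k(Z) := h_k^+(\alpha)\eu + h_k^-(\beta)\es$ (padding the shorter sum with zero coefficients up to $k = m-1$, since $m = \max(n_1,n_2)$). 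Each $H_k$ is $\Delta_{bc}$-harmonic precisely because its idempotent components are $\Delta_\alpha$- and $\Delta_\beta$-harmonic, again by the componentwise criterion for $\Delta_{bc} = \Delta_\alpha \eu + \Delta_\beta \es$.

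\emph{The remaining step} is to identify the scalar weights. Since $|\alpha|^{2k}$ and $|\beta|^{2k}$ are real, the expression $|\alpha|^{2k}\eu + |\beta|^{2k}\es$ should coincide with $|Z|_{bc}^{2k}$; here I would invoke the idempotent form of the bicomplex modulus, $|Z|_{bc}^2 = |\alpha|^2 \eu + |\beta|^2 \es$, so that its $k$-th power is $|\alpha|^{2k}\eu + |\beta|^{2k}\es$ by the multiplication rule $ZW = \alpha\alpha'\eu + \beta\beta'\es$. Multiplying out $|Z|_{bc}^{2k} H_k(Z) = |\alpha|^{2k} h_k^+(\alpha)\eu + |\beta|^{2k} h_k^-(\beta)\es$ and summing over $k$ then recovers $F^+\eu + F^-\es = F$, establishing \eqref{exppolyharm}.

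\emph{The main obstacle} I anticipate is not any single hard estimate but rather pinning down the precise definition and idempotent behaviour of $|Z|_{bc}^{2}$, which the excerpt introduces only implicitly; one must verify that the intended bicomplex modulus is the $\D$-valued (idempotent-diagonal) quantity $|\alpha|^2\eu + |\beta|^2\es$ rather than a scalar, so that its powers distribute correctly across the idempotents and match the two classical weights simultaneously. A secondary point requiring care is the bookkeeping when $n_1 \neq n_2$: the two Almansi sums have different lengths, and one must confirm that padding with vanishing harmonic coefficients is consistent with the claim that the $H_k$ range over $k = 0, \dots, m-1$ with $m = \max(n_1,n_2)$, and does not force a spurious lowering of the exact order.
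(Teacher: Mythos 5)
Your overall strategy --- reduce to the classical Almansi theorem componentwise through the idempotent decomposition --- is exactly the paper's, and your identification $|Z|_{bc}^{2}=ZZ^{*}=|\alpha|^{2}\eu+|\beta|^{2}\es$ together with the padding of the shorter sum up to $m=\max(n_{1},n_{2})$ is also how the paper assembles the final formula. However, your argument rests on a false premise, stated in your opening parenthesis: being hyperbolic-valued does \emph{not} force $F^{+}$ to depend only on $\alpha,\overline{\alpha}$ and $F^{-}$ only on $\beta,\overline{\beta}$. Hyperbolic-valued only means the two idempotent components are \emph{real-valued}; they remain functions on $\BC\cong\C^{2}$, i.e.\ of both $\alpha$ and $\beta$. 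The paper's own motivating counterexample makes this concrete: $F_{1}(\alpha\eu+\beta\es)=(\alpha+\overline{\alpha})(\beta+\overline{\beta})$ is real-valued (hence hyperbolic-valued) and $\Delta_{bc}$-harmonic, yet each idempotent component depends on both variables. Separation of variables of the kind you assume is precisely what requires the extra hypotheses $\partial_{\widetilde{Z}}F=\partial_{Z^{\dagger}}F=0$ in Theorem \ref{thmp000}, and those are not available here. As written, your application of Theorem \ref{thmAlmansi} to ``$F^{+}$ viewed as a polyharmonic function on $\C$'' is therefore unjustified, and the $H_{k}$ you construct cannot in general reproduce an $F$ whose components mix $\alpha$ and $\beta$.

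The gap is repairable, and the repair is what the paper actually does: freeze the other variable as a parameter. For each fixed $\beta_{0}$ the partial function $\alpha\mapsto F^{+}(\alpha\eu+\beta_{0}\es)$ is $\Delta_{\alpha}$-polyharmonic (this is what the componentwise criterion $\Delta_{bc}^{m}F=\Delta_{\alpha}^{m}F^{+}\eu+\Delta_{\beta}^{m}F^{-}\es=0$ really gives, since $\Delta_{\alpha}$ differentiates only in $\alpha,\overline{\alpha}$), so classical Almansi yields harmonic coefficients $h_{k}^{\beta_{0}}(\alpha)$ that \emph{depend on the parameter} $\beta_{0}$; symmetrically one gets $g_{k}^{\alpha_{0}}(\beta)$. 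Setting $H_{k}(Z)=h_{k}^{\beta}(\alpha)\eu+g_{k}^{\alpha}(\beta)\es$ still gives $\Delta_{bc}H_{k}=\Delta_{\alpha}h_{k}^{\beta}\,\eu+\Delta_{\beta}g_{k}^{\alpha}\,\es=0$, because $\Delta_{bc}$ only differentiates the $\eu$-component in $\alpha$ and the $\es$-component in $\beta$. With this parametrized version of your first step, the rest of your computation (powers of $|Z|_{bc}^{2}$ distributing across the idempotents, and the bookkeeping when $n_{1}\neq n_{2}$) goes through essentially verbatim and recovers the paper's proof.
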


\begin{proof}
	Let   $F$ 
	 be a hyperbolic-valued  polyharmonic function of order $m$ with respect to $\Delta_{bc}:= \Delta_\alpha\eu + \Delta_\beta \es$. Since the partial functions  
	$ F^+_{\beta_0}(\alpha) := F^+ (\alpha \eu + \beta_0 \es)$ and  $ F^-_{\alpha_0}(\beta) := F^- (\alpha_0 \eu + \beta \es)$ are polyharmonic  on $\C$ of order $r$ and $s$ with respect to the Laplacians $\Delta_\alpha$ and $\Delta_\beta$, for every $\beta_0$ and $ \alpha_0$ respectively, with $\max(r,s)= m$, one can apply Theorem \ref{thmAlmansi}. Thus, there exist certain harmonic functions
	$h^{\beta_0}_k$ and $g^{\alpha_0}_k$ with respect to $\Delta_\alpha$ and $\Delta_\beta$, respectively, such that 
	$$ \displaystyle F^+_{\beta_0}(\alpha) =  \sum_{k=0}^{m-1}  |\alpha|^{2k} h^{\beta_0}_k(\alpha) \quad \mbox{ and } \quad \displaystyle F^-_{\alpha_0}(\beta)=\sum_{k=0}^{m-1}  |\beta|^{2k}  g^{\alpha_0}_k(\beta).$$
	Thus, we can rewrite $F(Z)$  as in \eqref{exppolyharm}, 
	where 
	$ H_k(Z):= h^{\beta}_k(\alpha)\eu + g^{\alpha}_k(\beta) \es$ for $Z=\alpha\eu+\beta\es$, with  
	$$\Delta_{bc}  H_k =  \Delta_{\alpha} h^{\beta}_k(\alpha) \eu + \Delta_{\beta} g^{\alpha}_k(\beta) \es = 0.$$
	This completes the proof.
\end{proof}


\section{Hyperbolic real part of bc-polyharmonic functions}
In this section, we discuss the problem of characterizing the hyperbolic-valued $\Delta_{bc}$-bc-polyharmonic functions that are the hyperbolic real parts of the bicomplex-valued polyholomorphic functions. To this purpose, we begin by giving the proof of the bicomplex analog related the $(m,n)$-bc-holomorphic functions (stated as Theorem \ref{thmCharmn} in the introductory section).  


\begin{proof}[Proof of Theorem \ref{thmCharmn}]
	Starting from a given $f= f^+(\alpha) \eu +  f^-(\beta) \es\in \mathcal{A}^{[1]}_{m,n}(\BC)$ with $ f^+\in A_m^\alpha[\overline{\alpha}]$ and $ f^-\in A_n^\beta[\overline{\beta}]$, it is clear 
	that the functions $\psi^+=\Re(f^+) $ and $\psi^-=\Re(f^-)$ are $\Delta_\alpha$- and $\Delta_\beta$-polyharmonic of order $m$ and $n$, respectively.
	Therefore, $ \Re_{hyp}(f) 
	= \Re(f^+)\eu+\Re(f^-)\es =    \psi^+ \eu + \psi^- \es$ and $\Delta_{bc}^{\max(m,n)}( F) = \Delta_{bc}^{\max(m,n)}( \psi^+\eu+ \psi^-\es )=0.$ For the converse, let $F: \BC \longrightarrow \D$ be a given $\Delta_{bc}$-polyharmonic function  of order $\ell$. The assumption  $\partial_{Z^\dagger} F= \partial_{\widetilde{Z}} F=0$ ensures the independence of the idempotent competent $F^+$ (resp.  $F^-$) in $\beta$ and $\overline{\beta}$ (resp. $\alpha$ and $\overline{\alpha}$).  Therefore, one concludes by applying Theorem \ref{polyHH} to the $\Delta_\alpha$- (resp. $\Delta_\beta$-) polyharmonic function $F^+ (resp. F^-): \C\longrightarrow \R$.
\end{proof}

The next result discusses the bc-polyharmonicity of hyperbolic real part of a given bc-polyholomorphic function (whose proof result can be handled in many ways).

\begin{prop} \label{proppolholharm}
	Let $f: \BC\rightarrow \BC$ be a $(m,n,k)$-bc-polyholomorphic function in $\BC$. Then, the following assertions hold trues. 
	\begin{enumerate}
		
		\item[(i)] $f$ is $\Delta_{bc}$-bc-polyharmonic of order $m$.   
		
		\item[(ii)]  $f^{\dagger}$ is $\Delta_{bc}$-bc-polyharmonic of order  $\min(n,k)$ (equivalently $f$ is  $ \Delta_{bc}^{\dagger}$-bc-polyharmonic of order $\min(n,k)$).
		
		\item[(iii)] The idempotent component function $f^+$ is $\Delta_{\alpha}$-bc-polyharmonic of order $m$, while $f^-$ is $\Delta_{\alpha}$-bc-polyharmonic of order 
		$\min(m,n,k)$.  
		
		\item[(iv)] $\Re_{hyp} (f)$ is $\Delta_{bc}$-bc-polyharmonic of order $m$.
		
		\item[(v)] The function  $\Re_{hyp}(f^\dagger)$ is $\Delta_{bc}$-polyharmonic of order  $\min(n,k)$.	
		
		\item[(vi)] $\Re_c (f) $ is bc-polyharmonic of order $\max(m, \min(n,k))$, with respect to  $\Delta_\alpha$, $\Delta_\beta$ and $\Delta_{bc}$. 
	\end{enumerate} 	
\end{prop}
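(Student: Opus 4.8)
The plan is to work entirely in the idempotent decomposition $f = f^+ \eu + f^- \es$ and to reduce every assertion to the single-variable commutativity factorization $\Delta_\alpha^p = \partial_\alpha^p \partial_{\overline{\alpha}}^p$ (and the analogous $\Delta_\beta^p = \partial_\beta^p \partial_{\overline{\beta}}^p$), which holds because the Wirtinger derivatives in one complex variable commute. First I would record, exactly as in the proof of Proposition \ref{propChar}, the translation of the defining conditions $\partial_{Z^*}^m f = \partial_{\widetilde{Z}}^n f = \partial_{Z^\dagger}^k f = 0$ into the six scalar conditions
\begin{align*}
\partial_{\overline{\alpha}}^m f^+ = \partial_{\overline{\beta}}^n f^+ = \partial_{\beta}^k f^+ = 0, \qquad \partial_{\overline{\beta}}^m f^- = \partial_{\overline{\alpha}}^n f^- = \partial_{\alpha}^k f^- = 0,
\end{align*}
using the idempotent expressions \eqref{Wirtingerstar}, \eqref{Wirtingerdagger}, \eqref{Wirtingerbar}. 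The key observation to isolate is that $\Delta_\alpha^p g = \partial_\alpha^p \partial_{\overline{\alpha}}^p g$ vanishes as soon as \emph{either} $\partial_{\overline{\alpha}}^p g = 0$ \emph{or} $\partial_\alpha^p g = 0$; hence $\Delta_\alpha^p f^+ = 0$ for $p \ge m$, $\Delta_\alpha^p f^- = 0$ for $p \ge \min(n,k)$, and symmetrically $\Delta_\beta^p f^- = 0$ for $p \ge m$, $\Delta_\beta^p f^+ = 0$ for $p \ge \min(n,k)$.

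With this table in hand each item is essentially a one-line verification. For (i) and (iii), $\Delta_{bc}^m f = \Delta_\alpha^m f^+ \eu + \Delta_\beta^m f^- \es = 0$, and the component orders are read directly off the table. For (ii) I would use that $\dagger$-conjugation swaps the idempotent components, so $f^\dagger = f^- \eu + f^+ \es$, whence $\Delta_{bc}^{\min(n,k)} f^\dagger = \Delta_\alpha^{\min(n,k)} f^- \eu + \Delta_\beta^{\min(n,k)} f^+ \es = 0$; the equivalent statement for $\Delta_{bc}^\dagger$ follows from $(\Delta_{bc}^\dagger)^p f = \big((\Delta_{bc})^p f^\dagger\big)^\dagger$, in the spirit of Lemma \ref{lemreducytion}. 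For (iv) and (v) the point is that $\Delta_\alpha$ and $\Delta_\beta$ are real differential operators, so they commute with taking the real part of each idempotent component; consequently $\Delta_{bc}^p \Re_{hyp}(g) = \Re_{hyp}(\Delta_{bc}^p g)$, and (iv), (v) follow immediately from (i), (ii). Finally, for (vi), writing $\Re_c(f) = \frac{1}{2}\big(\Re(f^+) + \Re(f^-)\big)$ from \eqref{realc}, I would apply $\Delta_\alpha^p$ and $\Delta_\beta^p$ separately: the former annihilates $\Re(f^+)$ for $p \ge m$ and $\Re(f^-)$ for $p \ge \min(n,k)$, the latter does the reverse, so both vanish exactly when $p \ge \max(m,\min(n,k))$, giving the claimed order with respect to $\Delta_\alpha$, $\Delta_\beta$ and $\Delta_{bc}$ simultaneously.

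The computations are routine once the factorization is in place; the only genuinely delicate point is bookkeeping the \emph{exact} (minimal) orders rather than mere upper bounds. To pin these down I would invoke the exact-order structure of polyanalytic functions coming from the expansions of Proposition \ref{propExpSer} and Proposition \ref{propChar} (the top coefficient $a_{\ell-1}\not\equiv 0$): when the leading polyanalytic monomial of a component is $\alpha^a\overline{\alpha}^b$ with $c_{ab}\not\equiv 0$, the operator $\partial_\alpha^p\partial_{\overline{\alpha}}^p$ does not annihilate it for any $p\le\min(a,b)$, which forces the polyharmonic order to \emph{equal}, and not merely be bounded by, the stated minimum. This is the one place where an honest argument, rather than a formal annihilation count, is needed, and I expect it to be the main obstacle to a fully rigorous write-up.
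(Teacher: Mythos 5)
Your core argument is correct and is essentially the paper's own proof. The paper also reduces everything to the idempotent components (its proof of (iii) derives exactly your six scalar conditions), and its bicomplex-level identities $\Delta_{bc}^m f=\partial_Z^m\partial_{Z^*}^m f$, $\Delta_{bc}^\ell(f^\dagger)=(\partial_{Z^\dagger}^\ell\partial_{\widetilde Z}^\ell f)^\dagger$ and $2\Re_{hyp}(f)=f+f^*$ with $\Delta_{bc}f^*=(\Delta_{bc}f)^*$ are just packaged forms of your componentwise factorizations $\Delta_\alpha^p=\partial_\alpha^p\partial_{\overline\alpha}^p$, $\Delta_\beta^p=\partial_\beta^p\partial_{\overline\beta}^p$ and of the commutation $\Delta_{bc}^p\,\Re_{hyp}=\Re_{hyp}\,\Delta_{bc}^p$.

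The one step of your plan that would fail is the last paragraph, where you hope to force the \emph{exact} orders by a leading-monomial argument. Exactness of the polyanalytic orders gives a nonzero top coefficient in each variable \emph{separately}; it does not give a nonzero coefficient $c_{ab}$ with both indices maximal, and without such a ``full'' monomial the minimality claims are simply false. Concretely, with $m=1$, $n=k=2$, the function $f=(\beta+\overline\beta)\,\eu+(\alpha+\overline\alpha)\,\es$ is exactly $(1,2,2)$-bc-polyholomorphic, yet $\Delta_{bc}f^\dagger=0$, so the exact order $\min(n,k)=2$ claimed in (ii) is unattainable; and $f=\beta\overline\beta\,\eu+\alpha\overline\alpha\,\es$ satisfies $\Delta_\alpha f^-=1\neq 0$, so the order $\min(m,n,k)=1$ claimed in (iii) for $f^-$ fails as well --- the correct bound is the $\min(n,k)$ that your table produces, a discrepancy your write-up passes over when it says the component orders are ``read directly off the table.'' You are not behind the paper on any of this: its proof likewise establishes only the annihilation statements (the assertion $\Delta_{bc}^{\ell-1}(f^\dagger)\neq 0$ is made without argument, and its jump from $\Delta_\alpha^{\min(n,k)}f^-=0$ to ``order $\min(m,n,k)$'' is a non sequitur). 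So read all orders in the proposition as upper bounds; your annihilation table is the entire provable content, and no amount of extra work will recover the exact-order claims.
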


\begin{proof} 
	Assertion $(i)$ readily follows from $f$ being a $(m,n,k)$-bc-polyholomorphic function, 
	$\Delta_{bc}^m f = \partial_{Z}^m\partial_{Z^*}^m f =0$. For $(ii)$ we use the fact 
	$$\Delta_{bc}^\ell (f^\dagger) = (  ( \Delta_{bc}^\dagger )^\ell f ) ^\dagger =   \partial_{Z^\dagger}^\ell \partial_{\widetilde{Z}}^\ell f$$
	to get 
	$\Delta_{bc}^\ell (f^\dagger)=0$ for $\ell \geq \min(n,k)$ and $\Delta_{bc}^{\ell-1} (f^\dagger)\ne0$.
	This proves that $f^\dagger$ is also a bc-polyharmonic function of order $\min(n,k)$. 
	
	The proof of $(iii)$ follows making use of the idempotent decomposition. Indeed,  $f\in \mathcal{A}^{[2]}_{m,n,k}(\BC)$  reads equivalently  $\partial_{\overline{\alpha}}^m f^+ = \partial_{\beta}^n f^+ =\partial_{\overline{\beta}}^k f^+=0$ and  $\partial_{\overline{\alpha}}^m f^- = \partial_{\beta}^n f^- =\partial_{\overline{\beta}}^k f^-=0$. These two systems imply $\Delta_{\alpha}^m f^+ = \Delta_{\alpha}^m f^- =0$. Hence $f^+$ is a $\Delta_{\alpha}$-bc-polyharmonic function of order $m$.
	Moreover, we get  $\Delta_{\alpha}^{\min(n,k)} f^- = \Delta_{\beta}^{\min(n,k)}f^+=0$, which proves that $f^-$ is $\Delta_{\alpha}$-bc-polyharmonic of order 
	$\min(m,n,k)$.  
This can also be handled using Proposition \ref{propChar}, so that one gets  
	\begin{align*}
		\Delta_{\beta}^{r}f^{+} &= \sum_{\ell_{1},\ell_{2},\ell_{3}=0}^{m-1,n-1,k-1}
		\frac{\ell_{2}!}{(\ell_{2}-r)!}\frac{\ell_{3}!}{(\ell_{3}-r)!}
		\overline{\alpha}^{\ell_{1}}
		\overline{\beta}^{\ell_{2}-r} \beta^{\ell_{3}-r} h_{\ell_{1},\ell_{2},\ell_{3}}(\alpha) 
	\end{align*}
	and 
	\begin{align*}
		\Delta_{\alpha}^{r}f^{-} &= \sum_{\ell_{1},\ell_{2},\ell_{3}=0}^{m-1,n-1,k-1}
		\frac{\ell_{2}!}{(\ell_{2}-r)!}\frac{\ell_{3}!}{(\ell_{3}-r)!} \overline{\beta}^{\ell_{1}} \overline{\alpha}^{l_{2-r}}
		\alpha^{\ell_{3}-r} g_{\ell_{1}\ell_{2}\ell_{3}}(\beta).	
	\end{align*}
	With this we can provide another proof of $(i)$ and $(ii)$. 
	
	For the proof of $(iv)$, it should be noted that the involved function $(\alpha,\beta) \longmapsto f^{+}(\alpha e^{+}+\beta e^{-}) $
	is  a polynomial in the variable $\beta$ and $\overline{\beta}$ of exact degrees 
	$k-1$ and $n-1$, respectively. Moreover, it is  $m$-polyharmonic with respect to $\alpha$,  $\alpha \longmapsto f_{\beta}^{+}(\alpha) := f^{+}(\alpha e^{+}+\beta e^{-}) \in \mathcal{A}_{\alpha}^{m}[\overline{\alpha}]$. 
	Accordingly, 
		\begin{align*}
	\Delta_{bc}^{m}(\Re_{hyp} (f))	&=
	\Re (\Delta_{\alpha}^{m} f_{\beta}^{+} )e^{+}+ \Re (\Delta_{\beta}^{m} f_{\alpha}^{-} )e^{+} = 0.
		\end{align*}
	Therefore, $\Re_{hyp} f$ is $\Delta_{bc}$-bc-polyharmonic of order $m$.  This can also be reproved making use of $(i)$ and $(ii)$. Indeed, we have 
	$\Delta_{bc}^\ell (\Re_{hyp} f ) = \Re_{hyp} (\Delta_{bc}^\ell f)=0,$
	for $\ell \geq m$, since $2\Re_{hyp} (f) = f +f^*$ and $ \Delta_{bc} f^* = \left( \Delta_{bc} f\right) ^*$, which proves $(iv)$. 
	
	Assertion $(v)$ follows from $(ii)$ combined with the fact that  $$\Delta_{bc}\left(\Re_{hyp} f^\dagger \right) = \Re_{hyp} ( \Delta_{bc}^\dagger (f) )^\dagger .$$

	Finally, the proof of $(vi)$ lies in the polynomiality of $ \Re(f_{\beta}^+)$ and the polynomiality of $ \Re(f_{\beta}^-)$, which implies
	$$\Delta_{\beta}^{\min (n,k)}(\Re f_{\beta}^{+})=0  \quad \mbox{ and } \quad \Delta_{\alpha}^{(\min (n,k))}(\Re f_{\alpha}^{-})=0,$$ respectively. 
	Subsequently, for $s=\max\left(m,\min(n,k)\right)$ we have 
	\begin{align*}
		\Delta_{\alpha}^{s} ( \Re f_{\beta}^{+}+\Re f_{\alpha}^{-} )
		&=\Delta_{\alpha}^{s}(\Re f_{\beta}^{+})+  \Delta_{\alpha}^{s}(\Re f_{\alpha}^{-})=0
	\end{align*}
	and 
	\begin{align*}
		\Delta_{\beta}^{s} ( \Re f_{\beta}^{+}+\Re f_{\alpha}^{-} )
		=\Delta_{\beta}^{s}(\Re f_{\beta}^{+})+  \Delta_{\beta}^{s}(\Re f_{\alpha}^{-})=0.
	\end{align*}
	This proves that $\Re_{c}(f)$ is polyharmonic of order $s= \max (m,\min (n,k))$ with respect to $\Delta_{\alpha}$, $\Delta_{\beta}$ and $\Delta_{bc}$. 
	An alternative direct proof of $\Delta_{bc}$-bc-polyharmonicity of $\Re_c f$ makes use of the observation that 
	$$4\Re_c(f)= f+f^*+f^\dagger+\widetilde{f}= 2(\Re_{hyp} (f) + (\Re_{hyp} (f))^\dagger),$$ and making appeal to $(iv)$ together with $(ii)$.
\end{proof}

\begin{rem}	
	For $f\in \mathcal{A}^{[2]}_{m,n,k}(U)$, the function $\Re_{c}(f)$ is then harmonic with respect to
	$ \Delta_{\alpha}^{s}+\Delta_{\beta}^{s}$ with $s= \max (m,\min (n,k))$.
	Notice also that
	$$ \Delta_{bc}^{r}(\Re_c (f))  = 	\Delta_{bc}^{r}(\Re_{hyp} (f))  +  \Delta_{bc}^{r}\left((\Re_{hyp} (f))^\dagger\right) . $$ 
	Therefore, if $\Re_{hyp} (f)$ and $(\Re_{hyp} (f))^\dagger$ are bc-polyhamonic, then $\Re_{c}(f)$ is also  bc-polyharmonic. 
\end{rem}

\begin{rem}	
	Let $F: \BC \longrightarrow \D$ such that $F=\Re_{hyp}g$ for given bicomplex-valued function $g$. A sufficient condition for $F$ to be bc-polyharmonic of order $m$ is that $\partial_{Z^*}^{m}g =0$.
\end{rem}	

\begin{prop}\label{propunic}
	For $f\in \mathcal{A}^{[2]}_{m_f,n_f,k_f}(U)$ and $g\in \mathcal{A}^{[2]}_{m_g,n_g,k_g}(U)$ of exact order $(m_f,n_f,k_f)$ and $(m_g,n_g,k_g)$, respectively, such that $\Re_{hyp} f = \Re_{hyp} g$, then $(m_f,n_f,k_f) = (m_g,n_g,k_g)$.		
\end{prop}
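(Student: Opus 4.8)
The plan is to pass to the idempotent components and to read each of $m,n,k$ off from a polyanalytic/polyharmonic correspondence. Writing $f=f^+\eu+f^-\es$ and $g=g^+\eu+g^-\es$ with $f^\pm,g^\pm:\BC\to\C$, formula \eqref{realhyp} shows that the hypothesis $\Re_{hyp}f=\Re_{hyp}g$ is equivalent to the two scalar identities $\Re(f^+)=\Re(g^+)$ and $\Re(f^-)=\Re(g^-)$. So the task is to prove that the real part of each idempotent component pins down the polyanalytic orders attached to it, after which the triples are reassembled.

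To make the orders explicit I would invoke Proposition \ref{propChar} and Remark \ref{remact}: for $f$ of exact order $(m_f,n_f,k_f)$ the component $f^+$ is a polynomial in $\overline\alpha,\overline\beta,\beta$ with holomorphic coefficients in $\alpha$, while $f^-$ is a polynomial in $\overline\beta,\overline\alpha,\alpha$ with holomorphic coefficients in $\beta$. Denoting by $d^+_\bullet$ and $d^-_\bullet$ the exact degrees in the indicated variable, the idempotent reading of $\partial_{Z^*},\partial_{\widetilde Z},\partial_{Z^\dagger}$ gives $m_f=\max(d^+_{\overline\alpha},d^-_{\overline\beta})+1$, $n_f=\max(d^+_{\overline\beta},d^-_{\overline\alpha})+1$ and $k_f=\max(d^+_\beta,d^-_\alpha)+1$, and likewise for $g$. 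The proposition therefore reduces to matching these six degrees between $f$ and $g$.

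For the antiholomorphic degrees I would freeze all variables but one: fixing $\beta=\beta_0$, the map $\alpha\mapsto f^+(\alpha\eu+\beta_0\es)$ is polyanalytic in $\overline\alpha$, so by Theorem \ref{polyHH} its real part $\Re(f^+_{\beta_0})$ is $\Delta_\alpha$-polyharmonic of the corresponding order; since $\Re(f^+)=\Re(g^+)$ forces $\Re(f^+_{\beta_0})=\Re(g^+_{\beta_0})$ for every $\beta_0$, the common polyharmonic order yields $d^+_{\overline\alpha}(f)=d^+_{\overline\alpha}(g)$, and the analogous freezings recover $d^+_{\overline\beta}$, $d^-_{\overline\alpha}$ and $d^-_{\overline\beta}$. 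Feeding these back into the displayed maxima gives $m_f=m_g$ at once.

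The decisive difficulty is the $Z^\dagger$-order $k$, governed by the holomorphic degrees $d^+_\beta$ and $d^-_\alpha$. Passing to the real part, $\Re(h)=\tfrac12(h+\overline h)$, symmetrizes each holomorphic variable with its conjugate, so in a frozen partial function the real part only exposes the symmetric quantity $\max(d^+_\beta,d^+_{\overline\beta})$ rather than $d^+_\beta$ itself; the holomorphic $Z^\dagger$-direction and the antiholomorphic $\widetilde Z$-direction are interchanged by conjugation and thus become entangled in $\Re_{hyp}f$. Breaking this tie --- certifying that the leading holomorphic coefficients survive in, and are not swapped with the antiholomorphic ones by, the real part --- is the one genuinely non-formal step, and it is here that some non-degeneracy of the top-order coefficients (or additional information beyond $\Re_{hyp}f$) must be brought in to force $k_f=k_g$ and, with it, $n_f=n_g$.
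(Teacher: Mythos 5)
Your proposal does not prove the proposition, and you say so yourself: the step that would pin down $k$ (and with it $n$) is left open. In fact the difficulty reaches further back than you indicate, because the part you treat as settled has the same defect as the part you flag. Passing to real parts does not preserve exact degree in a conjugated variable any more than in a holomorphic one: $\Re(\overline{\alpha})=\Re(\alpha)$, so the exact polyharmonic order of $\Re(f^+_{\beta_0})$ can be strictly smaller than the exact polyanalytic order of $f^+_{\beta_0}$, and Theorem \ref{polyHH} (which is an existence statement) cannot be used to read $d^+_{\overline{\alpha}}$ off from $\Re(f^+_{\beta_0})$. Thus your freezing argument does not even deliver $m_f=m_g$.

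The deeper point is that the tie you could not break cannot be broken: the proposition as stated is false, so no completion of your argument (or of any argument) exists. Since $\Re_{hyp}(W)=\tfrac12\left(W+W^*\right)$ by \eqref{realhyp}, the map $\Re_{hyp}$ cannot distinguish a function from its $*$-conjugate. Concretely, $f(Z)=Z$ and $g(Z)=Z^*$ satisfy $\Re_{hyp}f=\Re_{hyp}g$, yet their exact orders are $(1,1,1)$ and $(2,1,1)$; and $f(Z)=Z^\dagger$, $g(Z)=\widetilde{Z}=(Z^\dagger)^*$ satisfy $\Re_{hyp}f=\Re(\beta)\eu+\Re(\alpha)\es=\Re_{hyp}g$, yet their exact orders are $(1,1,2)$ and $(1,2,1)$ --- precisely the $n\leftrightarrow k$ swap you predicted. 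For comparison, the paper's own proof founders at exactly this spot: its first step ($m_f=m_g$ and $\min(n_f,k_f)=\min(n_g,k_g)$) invokes the order statements of Proposition \ref{proppolholharm}, which implicitly assume that $\Re_{hyp}$ preserves exact polyharmonic order (refuted by the first example); and in its ``first case'' the operator $\partial_{Z^\dagger}^{\,k_f-n_f}$ applied to the derived identity $(\partial_{Z^\dagger}^{n_f}f)^*=\partial_{\widetilde{Z}}^{n_f}g$ annihilates both sides simultaneously, so the claimed conclusion $\partial_{\widetilde{Z}}^{n_f}g=0$ does not follow; the second example makes both sides equal to $1$ before differentiation and $0$ after, with $k_f=2\neq1=k_g$. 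So your instinct was correct: without additional non-degeneracy hypotheses, or data beyond $\Re_{hyp}f$, the triple of exact orders is not determined, and this gap also affects the use of this proposition in part (i) of Theorem \ref{Mainthmp}.
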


\begin{proof} Let $f$ and $g$ be as in Proposition \ref{propChar}. Then from $(i)$ and $(ii)$ we get
	$m_f=m_g$ and $\min(n_f,k_f) = \min(n_g,k_g)$. Accordingly, without loss of generality, we can assume that $\min(n_f,k_f) = n_f$. Thus, we distinguish two cases. 
	The first case of $n_f=k_g$ implies $n_f \leq \min(k_f,k_g)$. Therefore by applying the operator $\partial_{Z^\dagger}^{n_f}$ to both sides of $\Re_{hyp} (f) = \Re_{hyp} (g)$, keeping in mind the facts $\partial_{Z^\dagger}^{n_f} f=0$, $\partial_{Z^\dagger} f = (\partial_{\widetilde{Z}} f)^*$ and $\partial_{Z^\dagger}^{n_f}g = \partial_{Z^\dagger}^{n_g}g =0$, we obtain 
	$$ (\partial_{\widetilde{Z}}^{n_f} f )^* = \partial_{\widetilde{Z}}^{n_f} g  .$$
	Thus, by the action of $\partial_{\widetilde{Z}}^{\ell} $ for $\ell=k_f-n_f$ (resp. $\ell= k_g-n_g$), it readily follows that $0=  \partial_{\widetilde{Z}}^{n_f} g$ (resp. $\partial_{\widetilde{Z}}^{n_f} f=0$). This shows that $k_f \leq k_g$ (resp. $k_f\geq k_g$). Hence $k_f=k_g$.
	
	The second case of $n_f=n_g$ can be handled in a quite similar way.  Indeed, by making use of $\partial_{Z^\dagger}^{k_f} $, we get from $\Re_{hyp} (f) = \Re_{hyp} (g)$ 
	that $0=  \partial_{Z^\dagger}^{k_f} g$ since $ k_f\geq n_f$ and $ \partial_{Z^\dagger}^{k_f} f= 0= \partial_{Z^\dagger}^{k_f} f^* = \partial_{Z^\dagger}^{k_f} g^*$. 
	Hence $k_f \geq n_g$. But, by applying $\partial_{Z^\dagger}^{k_g} $ we obtain $ \partial_{Z^\dagger}^{k_g} f^* = \partial_{Z^\dagger}^{k_g} g$ which gives rise to 
	$ ( \partial_{\widetilde{Z}}^{n_g}f)^*=0$. This shows that $n_g \geq k_f$. This completes the proof of the requested assertion.
\end{proof}

The similar question for the subclass $\mathcal{A}^{[1]}_{m,n}(\BC)$ seems to require further investigation. To this end, we establish the following.

\begin{lem}\label{lempolholharm5}
	Let $F :\BC \longrightarrow \D$  be a $\Delta_{bc}$-polyharmonic function of order $m$. Then, there exist nonnegative integers $r,s$ such that $ m=\max(r,s)$,  and a bicomplex-valued function $G$ in $\BC$ satisfying 
	\begin{enumerate}
		\item[(i)] $G= \Re_{hyp} (F)$ and therefore $\Delta_{bc}$-polyharmonic.
		\item[(ii)] $ G$ belongs to $\ker(\partial_{Z^*}^m)$. 
	\end{enumerate}
\end{lem}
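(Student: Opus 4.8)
The plan is to run the implication ``bc-polyharmonic $\Rightarrow$ polyanalytic real part'' of Proposition~\ref{proppolholharm} in reverse. Since $F$ is hyperbolic-valued it satisfies $F=F^*$, hence $\Re_{hyp}(F)=\tfrac12(F+F^*)=F$; so the substance of the statement is to manufacture a genuinely bicomplex-valued lift $G$ with $\Re_{hyp}(G)=F$ that lies in $\ker(\partial_{Z^*}^{m})$ (the property then labelled (ii)), from which $\Delta_{bc}$-polyharmonicity of $G$ is automatic. I will build $G$ idempotent-component-wise by applying the planar correspondence between polyharmonic and polyanalytic functions in each of the variables $\alpha$ and $\beta$ separately.

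First I would pass to the idempotent decomposition $F=F^+\eu+F^-\es$ with $F^\pm:\BC\longrightarrow\R$. As recorded just before Proposition~\ref{thmpolharm5}, the relation $\Delta_{bc}^{m}F=0$ is equivalent to $\Delta_\alpha^{m}F^+=0$ together with $\Delta_\beta^{m}F^-=0$. Letting $r$ (resp.\ $s$) be the \emph{exact} $\Delta_\alpha$- (resp.\ $\Delta_\beta$-) polyharmonic order of $F^+$ (resp.\ $F^-$), the exactness of the order $m$ forces $m=\max(r,s)$; this supplies the required integers $r,s$.

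Next, for each frozen $\beta$ the partial function $\alpha\longmapsto F^+(\alpha\eu+\beta\es)$ is $\Delta_\alpha$-polyharmonic of order $r$, so by Theorem~\ref{polyHH} it is the real part of a function polyanalytic of order $r$ in $\alpha$. Concretely, Almansi's theorem (Theorem~\ref{thmAlmansi}) in the $\alpha$-variable writes $F^+=\sum_{k=0}^{r-1}|\alpha|^{2k}h_k$ with $h_k$ real and $\Delta_\alpha$-harmonic; writing each $h_k=\Re(c_k)$ for a function $c_k$ holomorphic in $\alpha$ (the classical fact on the plane, i.e.\ the scalar case underlying Theorem~\ref{thmp000}) and setting
$$G^+:=\sum_{k=0}^{r-1}\overline{\alpha}^{\,k}\left(\alpha^{k}c_k\right)$$
gives a polyanalytic function of order $r$ in $\alpha$ with $\Re(G^+)=F^+$. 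Performing the symmetric construction in $\beta$ yields $G^-$ polyanalytic of order $s$ in $\beta$ with $\Re(G^-)=F^-$. I then put $G:=G^+\eu+G^-\es$. By construction $\Re_{hyp}(G)=\Re(G^+)\eu+\Re(G^-)\es=F$, which is (i). Since $\partial_{Z^*}=\partial_{\overline\alpha}\eu+\partial_{\overline\beta}\es$ acts diagonally (Remark~\ref{remact}) and $G^+$ (resp.\ $G^-$) is a polynomial in $\overline{\alpha}$ (resp.\ $\overline{\beta}$) of degree $<m$, we get $\partial_{Z^*}^{m}G=\partial_{\overline\alpha}^{m}G^+\eu+\partial_{\overline\beta}^{m}G^-\es=0$, which is (ii); and $\Delta_{bc}^{m}=\partial_{Z}^{m}\partial_{Z^*}^{m}$ then makes $G$ $\Delta_{bc}$-polyharmonic of order at most $m$.

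The hard part will be the parameter dependence: Theorem~\ref{polyHH} only asserts existence of a polyanalytic lift for each frozen value of the second variable, and the passage from a real polyharmonic function to its polyanalytic ``primitive'' is non-unique (it is determined only up to an additive imaginary correction). I expect to dispose of this precisely by using the explicit Almansi decomposition instead of the abstract existence statement, since the harmonic components $h_k$, and hence the holomorphic conjugates $c_k$, inherit the smooth dependence of $F^+$ on the frozen variable $\beta$; the pointwise-in-$\beta$ lifts therefore glue into a single smooth $G^+$ (and likewise $G^-$), so that $G$ is a bona fide bicomplex-valued function of $Z$.
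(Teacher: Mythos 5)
Your proposal is correct and follows the same core route as the paper's proof: split $F=F^+\eu+F^-\es$, freeze one idempotent variable, apply the planar correspondence of Theorem \ref{polyHH} to obtain polyanalytic lifts of $F^+$ and $F^-$ of exact orders $r$ and $s$ with $m=\max(r,s)$, and assemble $G$ from these lifts so that $\Re_{hyp}(G)=F$ and $\partial_{Z^*}^{m}G=0$; you also read item (i) the same way the paper's own proof does, namely as $F=\Re_{hyp}(G)$, since as literally written ($G=\Re_{hyp}(F)$) it would be vacuous for hyperbolic-valued $F$. The one point where you genuinely diverge is in how the lifts are produced: the paper invokes functions $\varphi_\beta$, $\phi_\alpha$ ``involved in the proof of Proposition \ref{proppolholharm}'' (a loose cross-reference; in substance these are the abstract lifts furnished by Theorem \ref{polyHH} for each frozen $\beta$, resp.\ $\alpha$), whereas you construct them explicitly as $G^+=\sum_{k}\overline{\alpha}^{\,k}\bigl(\alpha^{k}c_k\bigr)$ from the Almansi decomposition $F^+=\sum_k|\alpha|^{2k}h_k$ and harmonic conjugates $h_k=\Re(c_k)$. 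This explicitness buys exactly what the paper's proof silently assumes: that the pointwise-in-$\beta$ lifts glue into a single function of $Z$, i.e.\ that $\varphi_\beta(\alpha)$ depends well on the frozen parameter, which is needed for the coefficients $\psi_j$ in the paper's final display to be well-defined bicomplex-valued functions. Your appeal to the uniqueness of the Almansi decomposition is the right mechanism for this; to finish it cleanly you should also fix a normalization of the conjugates (e.g.\ $\Im\, c_k(0)=0$ for each frozen $\beta$), since the passage from $h_k$ to $c_k$ is only unique up to such a choice, after which the $c_k$ indeed inherit the regularity of $F^+$ in $\beta$.
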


\begin{proof}
	Consider the bicomplex-valued function $G(\alpha\eu+\beta\es) :=  \varphi_\beta(\alpha)\eu + \phi_\alpha(\beta) \es ,$
	where $\varphi_\beta$ and $\phi_\alpha$ are those involved in the proof of Proposition \ref{proppolholharm}. Then, $G$ belongs to $  A_r^\alpha[\overline{\alpha}] \eu + A_s^\beta[\overline{\beta}] \es \subset A_m^\alpha[\overline{\alpha}] \eu + A_m^\beta[\overline{\beta}] \es  $.
	It is clear that $G$ is $\Delta_{bc}$-bc-polyharmonic which proves $(i)$.
	Moreover, we have  
	$$ \Re_{hyp}(G) =   \Re(\varphi_\beta)(\alpha) \eu  + \Re(\phi_\alpha)(\beta) \es = F .$$
	Next, by expanding  $\varphi_\beta \in A_r^\alpha[\overline{\alpha}]$ and $\phi_\alpha \in A_s^\beta[\overline{\beta}]$  
	as 
	$$ \varphi_\beta(\alpha) = \sum_{j=0}^{r-1}  \overline{\alpha}^j \varphi_{\beta,j}(\alpha) \quad \mbox{and} \quad \phi_\alpha(\beta) = \sum_{k=0}^{s-1}  \overline{\beta}^k \phi_{\alpha,k}(\beta)$$
	for some holomorphic functions 
	$\varphi_{\beta,j}$ and $  \phi_{\alpha,k}$, the function 
	$G$ can be rewritten as 
	\begin{align*}
		G_{r,s}(\alpha\eu+\beta\es) &=\frac 12 \left( \sum_{j=0}^{r-1}  \overline{\alpha}^j \varphi_{\beta,j}(\alpha) \eu +  \sum_{k=0}^{s-1}  \overline{\beta}^k \phi_{\alpha,k}(\beta) \es \right) \\
		&=  \sum_{j=0}^{\max(r,s)-1} {Z^*}^j \psi_j(\alpha\eu+\beta\es) \in \ker (\partial_{Z^*}^{\max(r,s)}) 
	\end{align*}
	for given bicomplex-valued functions $\psi_j$. This completes the proof of assertion  $(ii)$.
\end{proof}

With the above data, we can give a succinct proof of the theorem below.

\begin{thm}\label{thmpolholharm5}
	Let $F :\BC \longrightarrow \D$  be a $\Delta_{bc}$-polyharmonic function of order $m$. There exists a unique 
	pair of nonnegative integers $(r,s)$ such that $ m=\max(r,s)$ and if $f\in \mathcal{A}^{[1]}_{r',s'}(\BC)$ whose hyperbolic part is $F$, then $r'=r$ and $s'=s$. 
\end{thm}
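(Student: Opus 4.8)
The plan is to reduce the whole statement to the idempotent components of $F$ and to the sharp (exact-order) form of the polyanalytic/polyharmonic correspondence furnished by Theorem \ref{polyHH}. The pair $(r,s)$ will be read off directly from $F$, and the uniqueness assertion will then be a formal consequence of a rigidity statement applied to any realizing function.

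First I would fix notation by writing $F = F^+\eu + F^-\es$ with $F^\pm$ real-valued (legitimate since $F$ takes values in $\D$), and I would let $r$ and $s$ denote the \emph{exact} orders of $F^+$ and $F^-$ as $\Delta_\alpha$- and $\Delta_\beta$-polyharmonic functions, respectively. By the characterization of $\Delta_{bc}$-polyharmonicity recalled just before Proposition \ref{thmpolharm5}, one has $m=\max(r,s)$, so this pair $(r,s)$ is intrinsically attached to $F$. Existence of a realizing function in the class $\mathcal{A}^{[1]}_{r,s}(\BC)$ is then exactly what Lemma \ref{lempolholharm5} provides: its construction $G=\varphi_\beta(\alpha)\eu+\phi_\alpha(\beta)\es$ has idempotent components whose exact orders are $r$ and $s$, and $\Re_{hyp}(G)=F$.

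The core of the argument is the rigidity claim that whenever $f\in\mathcal{A}^{[1]}_{r',s'}(\BC)$ satisfies $\Re_{hyp}(f)=F$, then necessarily $(r',s')=(r,s)$. To prove it I would use Definition \ref{defbcpolhol1kind} to write $f=\varphi(\alpha)\eu+\psi(\beta)\es$ with $\varphi\in A_{r'}^\alpha[\overline{\alpha}]$ and $\psi\in A_{s'}^\beta[\overline{\beta}]$ of exact orders $r'$ and $s'$, so that $\Re_{hyp}(f)=\Re(\varphi)\eu+\Re(\psi)\es$. Comparing idempotent components with $F=F^+\eu+F^-\es$ and invoking uniqueness of the idempotent decomposition yields $\Re(\varphi)=F^+$ and $\Re(\psi)=F^-$. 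Now Theorem \ref{polyHH} asserts that a polyanalytic function of exact order $n$ has real part polyharmonic of the \emph{same} exact order; applied to $\varphi$ this forces $F^+$ to be polyharmonic of exact order $r'$, whence $r'=r$, and symmetrically $s'=s$.

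Finally, uniqueness of the pair $(r,s)$ follows formally: if two pairs both enjoyed the displayed property, then applying each of them to the realizing function $G\in\mathcal{A}^{[1]}_{r,s}(\BC)$ from the first step would force both pairs to equal the class $(r,s)$ of $G$, hence to coincide. The one step I would be most careful about, and the only genuine obstacle, is the repeated reliance on the word \emph{exact}: Theorem \ref{polyHH} must be invoked in its sharp version (equality of orders, not merely an inequality), which rests on the non-vanishing of the leading coefficients built into the definition of $\mathcal{A}^{[1]}_{r',s'}(\BC)$ and into the notion of exact polyharmonic order of $F^\pm$. Making that matching of leading terms explicit on both sides of $\Re(\varphi)=F^+$ is what turns the otherwise soft comparison of components into the desired equalities $r'=r$ and $s'=s$.
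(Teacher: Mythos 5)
Your proof is correct and follows essentially the same route as the paper's: both rest on the idempotent decomposition of $F$, the realizing function constructed in Lemma \ref{lempolholharm5}, and the exact-order correspondence of Theorem \ref{polyHH} applied componentwise to force $r'=r$ and $s'=s$. Your write-up is in fact somewhat more careful than the paper's (which conflates $f$ and $F$ notationally and leaves the appeal to Theorem \ref{polyHH} implicit in the phrase ``polyharmonicity of the idempotent component functions''), but the underlying argument is identical.
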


\begin{proof}
	Let $F:\BC \longrightarrow \BC\in \mathcal{A}^{[1]}_{r',s'}(\BC) $ and write 
	$ F=F^+(\alpha)\eu +F^-(\beta)\es$ with $F^+\in  \mathcal{A}^{\alpha}_{r'}[\overline{\alpha}]$ and $F^- \in \mathcal{A}^{\beta}_{s'}[\overline{\beta}]$, such that $f=\Re_{hyp} (f) = \Re (F^+ )(\alpha) \eu + F^-(\beta) \es$. 
	According to the proof of Lemma \ref{lempolholharm5}, we conclude that $\Re (F^+ )(\alpha) = \Re \varphi_\beta (\alpha) $ and $\Re (F^- )(\beta) = \Re (\phi_\alpha)(\beta) $
	for every $\alpha,\beta\in \C$.
	Therefore, the polyharmonicity of the idempotent component functions $F^+$, $F^-$, $\varphi_\beta$ and $\phi_\alpha$ shows that $r=r'$ and $s=s'$.   
\end{proof}

We conclude this paper with a proof of our main result (Theorem \ref{Mainthmp}) concerned with the bicomplex analog of Theorem \ref{polyHH} for the elements belonging to $\mathcal{A}^{[2]}_{m;n,k}(\BC)$.  Namely, we provide appropriate conditions to a hyperbolic-valued bc-polyharmonic function to be the hyperbolic part of certain $(m,n,k)$-bc-polyholomorphic function.  

\begin{proof}[Proof of Theorem \ref{Mainthmp}]
	Let $F: \BC \longrightarrow \D$ be a bc-polyharmonic function of order $m$  and assume that there exists a $(s,n,k)$-bc-polyholomorphic function $f \in \mathcal{A}^{[2]}_{m,n,k}(\BC)$ such that $F=\Re_{hyp} (f)$. Then, by Proposition \ref{propunic}, we have $s=m$. Moreover, a direct computation shows that for $\ell \geq \max(n,k)$ we have 
	$$\partial_{Z^\dagger}^\ell F = \frac 12 \left(  \partial_{Z^\dagger}^\ell f + (\partial_{\widetilde{Z}}^\ell f )^* \right)  = 0$$
	and  
	$$\partial_{\widetilde{Z}}^\ell F  = \left( \partial_{Z^\dagger}^\ell F \right)^* =0.$$ 
	Accordingly, $F$ belongs to $\ker(\partial_{\widetilde{Z}}^{\max(n,k)}) \cap \ker(\partial_{Z^\dagger}^{\max(n,k)})$.  
	
	Conversely, let $F: \BC \longrightarrow \D$ be a bc-polyharmonic function of order $m$ such that $F\in \ker(\partial_{Z^\dagger}^{n})\cap \ker(  \partial_{\overline{Z}}^{k})$. Then, $F$ is of the form 
	$$ F(Z) = \sum_{\ell_1=0}^{n-1}\sum_{\ell_2=0}^{k-1} G_{\ell_1,\ell_2}(Z,Z^*) (Z^\dagger)^{\ell_1} (\widetilde{Z})^{\ell_2}$$
	for some bicomplex-valued-functions $G_{\ell_1,\ell_2}$, which becomes bc-polyharmonic of order $m$ by assuming that $\Delta_{bc}^m F=0$. 
	Now, using the idempotent decomposition we get the system $\partial_{\beta}^n R^+= \partial_{\overline{\beta}}^k F^+= 0$ and $\partial_{\alpha}^n F^- =\partial_{\overline{\alpha}}^k F^-=0$. Thus, we can rewrite $F(Z) = F^+(Z) \eu + F^-(Z)\es$ with $Z= \alpha\eu + \beta\es$ as
	$$ F^+(Z) = \sum_{\ell_1=0}^{n-1}\sum_{\ell_2=0}^{k-1} a_{\ell_1,\ell_2}(\alpha,\overline{\alpha} ) \beta^{\ell_1} (\overline{\beta})^{\ell_2}$$
	and 
	$$ F^-(Z) = \sum_{\ell_1=0}^{n-1}\sum_{\ell_2=0}^{k-1} b_{\ell_1,\ell_2}(\beta,\overline{\beta}) \alpha^{\ell_1} (\overline{\alpha})^{\ell_2},$$
	so that 
	$$ F(Z) = \sum_{\ell_1=0}^{n-1}\sum_{\ell_2=0}^{k-1} \left( a_{\ell_1,\ell_2}(\alpha,\overline{\alpha} ) \eu + b_{\ell_1,\ell_2}(\beta,\overline{\beta}) \es \right)  (Z^\dagger)^{\ell_1} (\widetilde{Z})^{\ell_2},$$
	where $a_{\ell_1,\ell_2}(\alpha,\overline{\alpha} )$ and $b_{\ell_1,\ell_2}(\beta,\overline{\beta}) $ are complex-valued functions on $\C$. Now, taking into account that $F$ is an $m$-bc-polyharmonic function, we conclude  
	$$ \sum_{\ell_1=0}^{n-1}\sum_{\ell_2=0}^{k-1} \Delta_{bc}^m \left( a_{\ell_1,\ell_2}(\alpha,\overline{\alpha} ) \eu + b_{\ell_1,\ell_2}(\beta,\overline{\beta}) \es \right)  (Z^\dagger)^{\ell_1} (\widetilde{Z})^{\ell_2} = 0.$$
	Therefore $a_{\ell_1,\ell_2}(\alpha,\overline{\alpha} )$ and $b_{\ell_1,\ell_2}(\beta,\overline{\beta}) $ are  polyharmonic of order $r$ and $s$ with respect to $\Delta_\alpha$ and $\Delta_\beta$, respectively, such that  $m=\max(r,s)$. By identification, it follows 
	$$  G_{\ell_1,\ell_2}(Z,Z^*) = a_{\ell_1,\ell_2}(\alpha,\overline{\alpha} ) \eu + b_{\ell_1,\ell_2}(\beta,\overline{\beta}) \es .$$
	In the other hand, by making use of Theorem \ref{polyHH}, there are $m$-polynanalytic functions $\varphi_{\ell_1,\ell_2} \in  \mathcal{A}^{\alpha}_{r}[\overline{\alpha}]$ and $\psi_{\ell_1,\ell_2}\in \mathcal{A}^{\beta}_{s}[\overline{\beta}]$ such that   
	$a_{\ell_1,\ell_2} = \Re(\varphi_{\ell_1,\ell_2})$ and $  b_{\ell_1,\ell_2}= \Re(\psi_{\ell_1,\ell_2})$. Hence 
	$$ F(Z) = \sum_{\ell_1=0}^{n-1}\sum_{\ell_2=0}^{k-1} \Re_{hyp} f_{\ell_1,\ell_2} (Z,Z^*)   (Z^\dagger)^{\ell_1} (\widetilde{Z})^{\ell_2}.$$
	The involved functions $f_{\ell_1,\ell_2}$ are given by 
	$$ 
	f_{\ell_1,\ell_2} = a_{\ell_1,\ell_2}(\alpha,\overline{\alpha} ) \eu + b_{\ell_1,\ell_2}(\beta,\overline{\beta}) \es  $$ and belong  to $\mathcal{A}^{[1]}_{r,s}(\BC)$. 
\end{proof}

\begin{rem}
	For $n=k=1$ we recover the result of Theorem \ref{thmCharmn}.
\end{rem}


\noindent {\bf Data availability statement:}	All data generated or analyzed during this study are included in this article.

\section*{Conflict of interest}

The authors declare that they have no conflict of interest.

\end{document}